\documentclass[11pt]{amsart}
\usepackage{tikz-cd}
\usepackage{amsmath,amssymb, xypic,verbatim, amscd,color}
\usepackage[margin=0.9in]{geometry}
\usepackage{graphicx}
\usepackage{colordvi}
\usepackage{mathdots}
\usepackage[colorlinks=true, pdfstartview=FitV,
linkcolor=cyan, citecolor=red, urlcolor=blue]{hyperref}
\usepackage{amsfonts,latexsym}

\usepackage{thmtools}
\usepackage{thm-restate}
\usepackage{hyperref}
\usepackage{cleveref}
\setcounter{secnumdepth}{6}

\numberwithin{equation}{section}

\usepackage{amsfonts,amsmath,url, amsthm,xypic,amssymb,graphicx,color,enumerate,mathrsfs, amscd}
\usepackage{bm}

\usepackage{enumerate,bbm}
\usepackage[english]{babel}

\usepackage{ulem}
\usepackage[all]{xy}

\usepackage{thmtools}
\usepackage{thm-restate}

\usepackage{hyperref}


\def\Gm{\mathbb{G}_m}

\def\Q{\mathbb{Q}}
\def\P{\mathbb{P}}
\def\C{\mathbb{C}}
\def\A{\mathbb{A}}
\def\Z{\mathbb{Z}}

\def\x{\times}
\def\ox{\otimes}
\def\a{\alpha}





\def\tlt0{(\tilde LT/T)_0}

\def\cO{\mathcal{O}}


\DeclareMathOperator{\Aff}{Aff}
\DeclareMathOperator{\ec}{Spec }
\DeclareMathOperator{\im}{im}
\DeclareMathOperator{\bun}{Bun}

\newcommand{\abcd}[4]{\left(\begin{array}{cc}
  #1 & #2 \\
  #3 & #4 \\
\end{array} \right)}

\newcommand{\ab}[2]{\left(\begin{array}{c}
  #1 \\
  #2 \\
\end{array} \right)}

\newcommand{\mc}[1]{\mathcal{#1}}



\newtheorem{thm}{Theorem}[section]
\newtheorem{prop}[thm]{Proposition}
\newtheorem{lemma}[thm]{Lemma}
\newtheorem{cor}[thm]{Corollary}
\newtheorem{conj}{Conjecture}

\theoremstyle{definition}

\theoremstyle{remark}
\newtheorem{rmk}{Remark}
\theoremstyle{notation}

\newtheorem{example}{Example}

\DeclareMathOperator{\codim}{codim}
\DeclareMathOperator{\rank}{rank}
\DeclareMathOperator{\proj}{Proj\;}

\usepackage{pgf}
\usepackage{tikz}
\usetikzlibrary{arrows,automata} 
\begin{document}

\title[Hunting Bundles]{
  Hunting Vector Bundles on $\P^1 \x \P^1$
} 
 
\author{ Pablo Solis}
\address{Department of Mathematics,
 Stanford, CA}
\email{pablos.inbox@gmail.com}

\thanks{I would like to thank Adam Boocher who brought this problem to my attention. This paper also benefitted from helpful conversations with Nathan Ilten, Claudiu Raicu, Tom Graber, and Xinwen Zhu. 
}

\begin{abstract}
Boij-S\"oderberg theory concerns resolutions of graded modules over a polynomial ring over a field. Specifically Boij-S\"oderberg theory gives a description of the cone of Betti diagrams for Cohen-Macaulay modules. Eisenbud and Schreyer discovered a duality between the cone of Betti diagrams and the cone of cohomology tables for vector bundles on projective space. In the dual theory an important role is played by so called natural vector bundles $E$ which have the property that the cohomology of every twist of $E$ is concentrated in a single degree. In \cite{EisenbudMR2810424}, Eisenbud and Schreyer consider the bi-graded theory on $\P^1 \x \P^1$ and conjectured that natural vector bundles exist with prescribed Hilbert polynomial. The Hilbert polynomial depends on three rational number $\a,\beta, \gamma$. We prove this conjecture provided that $\a,\beta$ are not both integral.
\end{abstract}

\keywords{ vector bundles; sheaf cohomology}

\maketitle

\tableofcontents

\section{Introduction}
The aim of this paper is to describe some vector bundles on $Q = \P^1 \x \P^1$ and compute the cohomology of these bundles and their twists. Specifically we seek bundles $E$ such that for all twists $E(n,m) = E\ox \mc{O}_{Q}(n,m)$ the cohomology $H^*(E(n,m))$ is concentrated in one degree; such bundles are said to have natural cohomology.

The motivation to search for these vector bundles comes from Boij-S\"oderberg theory and specifically a conjecture stated in \cite{EisenbudMR2810424} which says:
\begin{conj}[Eisenbud and Schreyer]\label{conj}
For any $p(x,y) = (x+\alpha)(y+\beta)-\gamma \in \Q[x,y]$ with $\gamma>0$ there exists a vector bundle $E$ with natural cohomology and Hilbert polynomial $\chi(E(a,b)) = \rank(E) p(a,b)$ for $\rank(E)$ sufficiently big.
\end{conj}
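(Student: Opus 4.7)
The plan is to construct $E$ explicitly as a torsion-free sheaf assembled via iterated extensions of line bundles and twisted ideal sheaves on $Q = \P^1 \x \P^1$, and then to verify natural cohomology by a combination of explicit computation and a semicontinuity argument.

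First, twisting by $\cO(-\lfloor \alpha \rfloor, -\lfloor \beta \rfloor)$ reduces to the case $\alpha, \beta \in [0,1) \cap \Q$, and the hypothesis becomes $(\alpha, \beta) \neq (0, 0)$. Writing $\alpha = p_1/q_1$ and $\beta = p_2/q_2$ in lowest terms, I would take the rank $r$ to be a multiple of $\mathrm{lcm}(q_1, q_2)$. Riemann-Roch on $Q$ then translates the target Hilbert polynomial $\chi(E(m,n)) = r\cdot p(m,n)$ into integral Chern data, with $c_1(E)$ of bidegree $(r\alpha - r,\, r\beta - r)$ and $c_2(E)$ determined by $r\gamma$ up to a correction of the form $r(r-1)(\alpha-1)(\beta-1)$.

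Next, I would build $E$ as an iterated extension starting from a twisted ideal sheaf $E_0 = L_0 \otimes I_Z$ whose length $|Z|$ supplies the $r\gamma$ contribution to $c_2$, and successively forming
\[
0 \to L_i \to E_i \to E_{i-1} \to 0, \qquad i = 1, \dots, r-1,
\]
with line bundles $L_i$ of carefully chosen bidegree distributed so as to match the required $c_1$. The non-integrality of $\alpha$ or $\beta$ is what allows the $L_i$ to genuinely vary in bidegree; in the purely integral case the only numerically consistent distribution collapses to a single bidegree, which obstructs the cohomological argument below. For generic choice of $Z$ and extension classes, $E$ is locally free with the prescribed Chern character and hence the prescribed Hilbert polynomial.

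The main obstacle is verifying natural cohomology. The Euler characteristic $\chi(E(m,n)) = r\cdot p(m,n)$ changes sign across the hyperbola $(m+\alpha)(n+\beta) = \gamma$, and naturality demands that $H^1(E(m,n))$ vanish where $\chi > 0$ while $H^0$ and $H^2$ both vanish where $\chi < 0$ (with $H^0$ versus $H^2$ determined by the relevant corner of the $(m,n)$-plane). I would handle this in two steps: (i) show, by upper semicontinuity of $h^i$ over the parameter space of choices of $Z$ and extension classes, that the locus of bundles with natural cohomology is Zariski open; and (ii) exhibit one explicit member of the family with natural cohomology, by computing $H^\bullet(E(m,n))$ inductively from the long exact sequences of the defining extensions together with the known cohomology of twists of ideal sheaves and line bundles on $Q$. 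The hardest twists are those lattice points sitting nearest the hyperbola, and the non-integrality assumption is precisely what lets the bidegree distribution of the $L_i$ resolve these borderline cases; its failure is exactly what keeps the purely integral case outside the range of the present method.
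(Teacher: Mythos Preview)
First, a clarification: the statement you are attempting is Conjecture~\ref{conj}, which the paper does \emph{not} prove in full. The paper establishes only the partial case where $\alpha,\beta$ are not both integral (Theorem~\ref{thm:main}), and explicitly leaves the residual case $p(x,y)=xy-\gamma$ open. Your reduction to $(\alpha,\beta)\neq(0,0)$ and your closing remark show you too are targeting only this partial case, so I compare your sketch against the paper's proof of Theorem~\ref{thm:main}.

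Your route is genuinely different. The paper views a bundle on $Q$ as a map $\P^1\to\bun_{GL_r}$, constructs $E$ as a ``constant bundle'' $E(F,F_1,F_2,\eta)$ with $F=\cO^{r_1}\oplus\cO(-1)^{r_2}$ and $\eta\in Ext^1(F_2,F_1)\otimes H^0(\cO_{\P^1}(1))$, and reduces natural cohomology of $E$ to natural cohomology of the pushforwards $Rq_*E(n,0)$ on $\P^1$, which sit in $Ext^1(F_2^{|n|},F_1^{|n+1|})$. The crucial structural point (Proposition~\ref{p:step3}) is that the connecting map $c_\eta(n,m)$ is assembled as $\bigoplus_i c_\eta(1,m)_{i,i+1}$, so a \emph{single} genericity condition on $\eta$---that $c_\eta(1,m)$ and $c_\eta(-2,m)$ have maximal rank, which is a finite-dimensional open condition by Lemma~\ref{l:steps1,2}---forces maximal rank, hence natural cohomology, at \emph{every} $(n,m)$ simultaneously.

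Your proposal lacks exactly this mechanism, and that is a genuine gap. Natural cohomology is a condition at every lattice point of $\Z^2$; semicontinuity tells you each individual condition cuts out an open locus in your parameter space of $(Z,\text{extension classes})$, but an infinite intersection of nonempty opens in a finite-type space can well be empty. To close the argument you need either a reduction to finitely many twists or, as the paper does, a structural reason why the conditions at all $(n,m)$ follow from finitely many. Your step~(ii), ``exhibit one explicit member,'' is where this would have to occur, but it is only asserted: the long exact sequences attached to iterated extensions by line bundles and ideal sheaves do not automatically make the connecting maps of maximal rank at every twist, and you give no argument why the variation in bidegree of the $L_i$ (which you invoke non-integrality to allow) would force this. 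Without that, the core of the proof is missing.
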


There are direct sums of line bundles on $Q$ which have natural cohomology and have $\gamma<0$ (e.g. $\cO \oplus \cO(1,1)$). But in \cite[Lemma 5.1]{EisenbudMR2810424} it is shown that if $E$ is not a direct sum of line bundles then the condition on $\gamma$ is necessary.

\newtheorem*{thm:test}{Theorem \ref{thm:main}}
\begin{thm:test}
Let $p(x,y) = (x+\alpha)(y+\beta)-\gamma \in \Q[x,y]$ with $\gamma>0$ and $\alpha, \beta$ not both integral. If $r>1$ is an integer such that $r p(x,y) \in \Z[x,y]$ then there is a vector bundle $E$ of rank $r$ with natural cohomology such that $\chi(E(a,b)) = r p(a,b)$.
\end{thm:test}

After applying a suitable integral shift $(x,y) \mapsto (x+n,y+m)$ it is straightforward to see the cases of conjecture \ref{conj} not covered by theorem \ref{thm:main} are when $p(x,y) = x y - \gamma$. However it seems plausible that an approach not too different than the one given here could be made to work for the remaining cases.

\subsection{Boij-S\"oderberg theory}
Boij-S\"oderberg theory originally concerned the cone of Betti table of Cohen-Macaulay modules $M$ over a polynomial ring over a field $R = k[x_1,\dotsc, x_n]$ \cite{MR2427053}. The Betti numbers $\beta_{i,j}(M)$ encode the minimal free resolution of $M$ and are a refinement of the Hilbert function of $M$: $\dim M_d = \sum_i (-1)^i(\sum_j \beta_{i,j}(M))$; two modules can have the same Hilbert function but different Betti numbers.

The numbers $\beta_{i,j}(M)$ can be considered as an integral vector $\beta_{\bullet,\bullet}(M)$ in an infinite but countable dimensional $\Q$ vector space $V$; $\beta_{\bullet,\bullet}(M)$ is called the Betti table of $M$. Essentially Boij-S\"oderberg theory studies Betti tables up to scalar multiple. Specifically one considers the cone $C_{BS}$ generated by Betti tables of Cohen-Macaulay modules. An introduction to this material can be found here \cite{ MR2932580}.

The geometric side of Boij-S\"oderberg theory concerns the cohomology of vector bundles $E$ on projective space $\P^n$. In this case one considers the numbers $\gamma_{i,j}(E) = \dim H^i(\P^n,E(j))$. Then the collection of all $\gamma_{i,j}(E)$ gives an integral vector $\gamma_{\bullet,\bullet}(E)$ in an infinite dimensional $\Q$ vector space and $\gamma_{\bullet,\bullet}(E)$ is called the cohomology table of $E$. We set $C_{ES}$ to be the cone of cohomology tables of vector bundles; this geometric side was formulated by Eisenbud and Schreyer in \cite{MR2505303} where it is shown that $C_{BS}$ and $C_{ES}$ are dual under a natural pairing. 

In fact cohomology tables were considered in \cite{MR1990756} even before Boij-S\"oderberg theory was formulated. This is not surprising because many interesting properties of a vector bundle are encoded in its cohomology table. For example $E$ on $\P^n$ is a direct sum of line bundles if and only if $H^i(\P^n,E) = 0$ for $0<i<n$; this is due to Horrocks and is a special case of \cite[7.5,9.4]{MR0169877} although the statement and proof is more easily extracted from \cite[sect.\;5]{MR509589}.

Of course for any point $v$ in any polyhedral cone $C$ we can express $v$ as a finite sum of vectors on the extremal rays of $C$. For the cones $C_{BS}$ and $C_{ES}$ this statement becomes more significant when coupled with the fact that one can always find an integral point on the extremal rays that is represented by a module/vector bundle. Betti tables which are extremal are called pure resolutions and cohomology tables which are extremal are called supernatural bundles. Thus for example the cohomology table of an arbitrary vector bundle $E$ can be expressed as a $\Q$-linear combination of cohomology tables of supernatural bundles $E_i^{super}$:
\begin{equation}\label{eq:super}
\gamma_{\bullet,\bullet}(E) = \sum_i c_i \gamma_{\bullet,\bullet}(E_i^{super}), \ \ c_i \in \Q.
\end{equation}
It is natural ask if the numerical relationship in equation \ref{eq:super} can be ``lifted'' to a geometric relationship between $E$ and $E_i^{super}$. In fact Erman and Sam show this is possible in some cases \cite{MR3594287}.
  
A good amount is known about the cone of cohomology table $C_{ES}(X)$ on other projective varieties $X$. For example if $X \subset \P^n$ is $d$ dimensional, $\pi \colon X \to \P^d$ is a finite linear projection and if $F \in Coh(X)$ has the property that $\pi_*F = \cO_{\P^d}^r$ for some $r>0$ then $C_{ES}(X) = C_{ES}(\P^d)$. Such a sheaf $F$ is called an Ulrich sheaf. Determining which $X \subset \P^n$ have Ulrich sheaves is an open problem. Perhaps they all do? This is a question asked in \cite{MR1969204}.

In contrast much less is known in a multigraded setting. There is a duality theorem \cite{MR3692884}, which applies to toric variety that is defined at the level of derived categories. There are also some results about the cone of bigraded Betti tables \cite{MR3051372,MR2810421,MR2727620}. But in some sense conjecture \ref{conj} captures how little is known about cohomology tables of vector bundles on $\P^1 \x \P^1$. 

In general if $X$ is a projective variety and $L_1, \dotsc, L_n \in Pic(X)$ are line bundles then a vector bundle $E$ on $X$ has {\it natural cohomology} if for every $(k_1,\dotsc, k_n$ the cohomology of the twist $E\ox L_1^{k_1} \ox \dotsb \ox L_n^{k_n}$ is concetrated in one degree. The most relevant cases are $\Z$ graded setting of $\cO_{\P^n}(1)$ on $\P^n$ and the bi-graded setting of $\cO_{\P^1 \x \P^1}(1,0),\cO_{\P^1 \x \P^1}(0,1)$ on $\P^1 \x \P^1$. In the $\Z$ graded setting the supernatural bundles appear as a subset of natural vector bundles. Our contribution here is theorem \ref{thm:main} which demonstrates the existence of a large number of bi-graded natural vector bundles. In light of theorem \ref{thm:main} the entirety of conjecture \ref{conj} now seems within reach. This would show that all the extremal rays in the bigraded version of $C_{ES}(\P^1 \x \P^1)$ can be represented by a vector bundle. A criterion for when a bi-graded natural vector bundle generates an extemal ray in $C_{ES}(\P^1 \x \P^1)$ is stated in \cite[5.2]{EisenbudMR2810424}. An explicit algorithm to express a general point as a sum of extremal rays would be obvious directions of future work as well as the question of how to `lift' as in \cite{MR3594287}.  

Finally, we note Boij-S\"oderberg theory has now expanded in many different directions. The interested reader can consult \cite{MR3727502} for a more expansive discussion.

\subsection{Strategy}
Here we discuss the perspective we used to prove theorem \ref{thm:main}. A fundamental tool in our approach is the moduli stack $\bun_{G}$ where $G = GL_r$. This stack parameterizes rank $r$ vector bundles on $\P^1$. The stack $\bun_G$ relates to bundles $E$ on $\P^1 \x \P^1$ in three ways:
\begin{itemize}
\item[(1)] A map $\P^1 \to \bun_G$ yields a bundle $E$.
\item[(2)] Given $E$ its restriction $E|_{\P^1 \x pt}$ is a point of $\bun_G$.
\item[(3)] The torsion free part of $Rp_*E$ is point $F$ of $\bun_G$ and the rank of $F$ is determined by $H^*(E|_{\P^1 \x pt})$.
\end{itemize}

Such statements hold on any surface that is the product of smooth projective curves. But the situation is particularly nice for $\P^1$. The splitting theorem \footnote{This result, often attributed to Grothendieck (1957) and Birkhoff (1913), goes back to at least Dedekind and Weber \cite{DedekindMR1579901} (1882). See \cite[Chap.\;1,2.4]{MR2815674} for a short discussion.} for bundles on $\P^1$ assets that any rank $r$ vector bundle $E$ on $\P^1$ is a direct sum of line bundles: $F \cong \bigoplus_{i = 1}^r \cO(n_i)$. In turn the splitting theorem allows for explicit computations to be made over $\bun_G$.

The basic idea is to consider vector bundles $E$ on $\P^1 \x \P^1$ that arise from maps $f \in \hom(\P^1,\bun_G)$ such that $\im f$ contains a single point. We call these bundles {\it constant bundles} but constant bundles are in general {\it not} the pull back of a vector bundle on $\P^1$ because of the stack nature $\bun_G$. We construct a specific class of constant bundles that depend on data $(F,F_1,F_2,\eta)$ where $F,F_1,F_2$ are bundles on $\P^1$ and $\eta$ is an element of $Ext^1(F_2,F_1)\ox H^0(\P^1, \cO(1))$. Then the resulting rank $r$ bundle $E$ satisfies (proposition \ref{p:constantBundlesChi}(d)):
\begin{equation*}\label{eq:EulerCharacteristic}
\frac{\chi(E(x,y))}{r} =  \left(x+\frac{\chi(F)}{r}\right)\left(y+\frac{\chi(F_1\oplus F_2)}{r}\right)-\frac{\deg (F_1^\vee \ox F_2)}{r^2}
\end{equation*}
Given this formula it is possible to show constant bundles exists with prescribed Hilbert polynomial. Then we show (proposition \ref{p:step3}) that by a generic choice of $\eta$ one can ensure that the pushforward of $E$ along $Q \to \P^1$ has natural cohomology.
 
Section \ref{s:prelims} presents basic notation and gives a brief of account of stacks. It also discusses standard results about the stack $\bun_G$ which we utilize in the proof of theorem \ref{thm:main}. Section \ref{s:pushforwardQ} describes the constraints imposed by natural cohomology on the pushforward of a vector bundle along a projection $Q \to \P^1$. These constraints show that in most cases of conjecture \ref{conj} the vector bundle must be of a particular form which we call a constant bundle. In section \ref{s:constant_bundles} various properties of constant bundles are proved leading up to a proof of theorem \ref{thm:main}.

Conjecture \ref{conj} arose from Boij-S\"oderberg through the duality discovered by Eisenbud and Schreyer between the cone of Betti diagrams and the cone of cohomology tables for vector bundles on projective space. This duality is valuable because it opens the door for geometric tools such as moduli stacks to be used. It seems questions in Boij-S\"oderberg theory can suggest, via duality, interesting and perhaps unexpected questions and results in geometry. We hope the geometric techniques here provide a different perspective that can be used on related problems.

\subsection{Notation}
We work over $\ec \C$ and write $\P^1 = \proj \C[z_0,z_1]$ and take $z = \frac{z_1}{z_0}$ as a coordinate on $\P^1$. On $Q = \P^1 \x \P^1$ we think of the second copy as $\proj \C[w_0,w_1]$ and take $w = \frac{w_1}{w_0}$ as a coordiante. If $\mathcal{F} \in \mbox{Coh}(Q)$ we denote $\mathcal{F}(n,m) = \mathcal{F}\ox \cO_Q(n,m)$. We often write $\cO$ instead of $\cO_{\P^1}$ or $\cO_Q$ when no confusion is likely to arise. As usual, $h^i = dim H^i$. Generally $E$ is a vector bundle on $Q$ and $F$ is a vector bundle on $\P^1$.

To a vector bundle $E$ on $Q$ we associate constants $\a_E,\beta_E, \gamma_E \in \Q$ to $E$ via
\begin{equation}\label{eq:cxcyc0}
\frac{\chi(E(x,y))}{\rank E} = (x + \a_E)(y+\beta_E) - \gamma_E.
\end{equation}
The category of schemes over $\C$ is denoted $Sch_\C$ and $Grpo$ denotes the category of groupoids.
 
\section{Stacks and $\bun_G$}\label{s:prelims}
We use the language of stacks in a relatively minimal way. Indeed the reader willing to take for granted some results about moduli stacks can understand the key ideas without much loss. Here we give some remarks on stacks which should be sufficient to follow the paper. An introductory treatment that is more than sufficient for our purposes is \cite{FantechiMR1905329}. We also discuss the stack $\bun_G$ which plays a fundamental role in our proof.

A scheme $X$ can, through its functor of points, be thought of as a functor $X\colon Sch_\C \to Set$ where $Sch_\C$ is the category of $\C$-schemes and for a scheme $T$ we have $X(T) = \hom(T,X)$. There are necessary and sufficient conditions on a functor $X$ that guarantee it is representable by a scheme: $X$ must be a sheaf for the Zariski topology and $X$ must be covered by open sub functors that are themselves representable .

Stacks can be described similarly. Given a functor $\mathcal{X} \colon Sch_\C \to Grpo$ there is a list of additional properties on $\mathcal{X}$ that turn it into a stack. In \cite{FantechiMR1905329} these are called (1) descent data is effective, (2) isomorphisms form a sheaf, and (3) there is a scheme $X$ and an (\'etale or smooth) surjective morphism $X  \to \mathcal{X}$. For this paper it's largely enough to think of stacks as functors into Groupoids. 

A basic example is that of a quotient stack. Let $H$ be an algebraic group acting on a scheme $X$. Define $X/H \colon Sch_\C \to Grpo$ by letting $X/H(T)$ be the groupoid of pairs $\langle (P,\alpha) \rangle$ where $P \to T$ is a principal $H$ bundle and $\alpha \colon P \to X$ is an $H$-equivariant map. One can form the associated fiber bundle $P\x^H X = P \x X/\sim$ where $(p h, x) \sim (p,h x)$. The scheme $P\x^H X$ is a fiber bundle over $T$ with fiber $X$. Then the $H$-equivariant map $\alpha$ is equivalent to a section $\sigma: T \to P\x^H X$. Whenever a smooth group $H$ acts on a smooth scheme $X$ we have $\dim X/H = \dim X - \dim H$. In particular $\dim pt/H = -\dim H$. 

In general we are interested in the moduli stack of $H$-bundles over a fixed scheme $X$. This stack can be conveniently defined as $\bun_H(X):= \hom(X,pt/H)$. More directly $\bun_H(X)(T)$ is the groupoid of principal $H$-bundles on $X \x T$. Over $X \x \bun_H(X)$ there is a universal principal bundle $P^{univ}$. The functorial perspective is then to say that $f \in \hom(T,\bun_H(X)) = \bun_H(X)(T)$ yields a map 
\begin{equation}\label{eq:Puniv}
X\x T \xrightarrow{(id,f)} X \x \bun_H(X)
\end{equation}
and any principal $H$ bundle on $X \x T$ arises as $(id,f)^*P^{univ}$ for some $f$. We are primarily interested in the case $X = \P^1$ and in this case we suppress $\P^1$ and simply write $\bun_H$.

\begin{rmk}\label{rmk:GL_rBundVectorBund}
When $G = GL_r$ the groupoid of principal $G$ bundles is equivalent to groupoid of rank $r$ vector bundles where morphisms are given by vector bundle isomorphisms. The equivalence is given by sending a principal bundle $P$ to the vector bundle $P(\C^r):= P\x_G \C^r$. 

Because of this equivalence we will think of $G$-bundles as vector bundles and sometimes tacitly identify $P$ and $P(\C^r)$.  
\end{rmk}

If we need to emphasize the underlying set of a stack $\mc{X}$ we write $\{\mc{X}\}$. Thus $X/H$ is the quotient stack and $\{X/H\}$ is the set of $H$ orbits in $X$.

\subsection{$\bun_G$, Automorphisms and Extensions}
Let $H$ be an affine algebraic group. Consider the basic cover by affines of $\P^1$: $\P^1 = U_- \cup  U_+$ with $U_- =\ec \C[z^{-1}]$, $U_+ = \ec \C[z]$ and $U_\pm = U_-\cap U_+$. We use this cover to construct three ind-algebraic groups: $H[z],H[z^{-1}],H[z^\pm]$.

Briefly an ind-scheme is an increasing union of schemes $\cup_{i\geq 0} X_i$ with $X_i \to X_{i+1}$ a closed immersion. An ind-algebraic group is a group object in the category of ind-schemes. We define $H[z],H[z^{-1}],H[z^\pm]$ through their functor of points on the subcategory $\Aff_\C$ of affine $\C$-schemes or equivalently the category  of $\C$-algebras.

As functors $H[z],H[z^{-1}],H[z^\pm] \colon \Aff_\C \to Set$ we have
\begin{align*}
H[z](R):= H(R[z]) && H[z^{-1}](R):= H(R[z^{-1}]) &&  H[z^\pm](R):= H(R[z^\pm]) 
\end{align*}
  
The group $H[z^\pm]$ gives transition functions for principal $H$ bundles on $\P^1$.

 Indeed set $P_- =U_- \x H$ and $P_+ = U_+ \x H$. Any $\gamma(z) \in H(\C[z^\pm])$  determines an isomorphism
\begin{equation}\label{eq:gamma}
\gamma \colon P_+|_{U_\pm} \to P_-|_{U_\pm}\ \ \  \gamma(z,h) := (z, \gamma(z) h)
\end{equation}
We denote by $P_\gamma$ the bundle obtained from gluing $P_-,P_+$ along $\gamma$. Different $\gamma$ can lead to an isomorphic bundle. We can alter $P_+,P_-$ by an automorphism in $H(\C[z])$ and $H(\C[z^{-1}])$ respectively leading to a bundle with transition function
\[
P_+|_{U_\pm} \xrightarrow{\gamma_+^{-1}} P_+|_{U_\pm} \xrightarrow{\gamma} P_-|_{U_\pm} \xrightarrow{\gamma_-} P_-|_{U_\pm}
\] 
\begin{rmk}
Recall (remark \ref{rmk:GL_rBundVectorBund}) when $H = GL_r$ then a principal $H$ bundle is equivalent to a rank $r$ vector bundle. In this case we write $F_\gamma = P_\gamma(\C^r)$ to be the associated vector bundle. 
\end{rmk}

\begin{rmk}\label{rmk:O(n)convention}
Under our convention \eqref{eq:gamma} the bundle $\cO(n)$ corresponds to the transition function $z^{-n}$.
\end{rmk}

The following proposition states the precise connection between the groups above and $\bun_G$.
\begin{prop}\label{p:basicsBunG}
Let $G$ be a finite product $\prod_i GL_{r_i}$. Then
\begin{itemize}
\item[(a)] There is a surjective map $G(\C[z^\pm]) \to \{\bun_G\}$ which sends $\gamma \mapsto F_\gamma$.
\item[(b)] The map in (a) descends to a bijection of sets $G(\C[z^{-1}])\backslash G(\C[z^\pm])/G(\C[z]) \leftrightarrow \{\bun_G\}$.
\item[(c)] $G[z^{-1}] \x G[z]$ acts on $G[z^\pm]$ via $(h_-,h_+)g = h_-g h_+^{-1}$ and the stack quotient leads to an equivalence
\[
\bun_G \cong  G[z^{-1}]\backslash G[z^\pm]/G[z]
\]
\item[(d)] The irreducible components of $\bun_G$ are the substacks $\bun_{G,d}$ of $G$-bundles of degree $d$.
\item[(e)] Suppose $H = Aut(F)$ where $F \in \bun_G(\C)$ then  statement (a) holds for $H$ as well. 
\end{itemize}
\end{prop}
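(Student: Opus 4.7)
The plan is to exploit the affine cover $\P^1 = U_+ \cup U_-$ together with the fact that every vector bundle on the affine line is trivial (Quillen--Suslin, or directly from $\C[z]$ being a PID), so that a $G$-bundle on $\P^1$ is entirely encoded by a transition function in $G(\C[z^\pm])$.

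For (a), given $F \in \bun_G(\C)$ with $G = \prod_i GL_{r_i}$, decompose $F$ as a tuple of vector bundles; each restriction to $U_\pm \cong \A^1$ is trivial, so choose trivializations $\phi_\pm$ and form the composition $\phi_- \circ \phi_+^{-1}$ on the overlap $U_+ \cap U_-$ to obtain $\gamma \in G(\C[z^\pm])$ with $F \cong F_\gamma$. Part (e) follows by the same construction once one knows $H$-bundles on $\A^1$ are trivial: writing $F \cong \bigoplus_i \cO(a_i)^{m_i}$, the group $\text{Aut}(F)$ is a semidirect product of $\prod_i GL_{m_i}$ by a unipotent group, hence special in the sense of Serre, so all of its torsors on $\A^1$ are trivial.

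For (b), any two trivializations of $F|_{U_+}$ (respectively $F|_{U_-}$) differ by an element of $G(\C[z])$ (respectively $G(\C[z^{-1}])$), and such changes replace $\gamma$ by $h_- \gamma h_+^{-1}$; conversely any equivalence of this form comes from a bundle isomorphism $F_\gamma \cong F_{\gamma'}$, giving the double-coset bijection. For the stack statement (c), the same construction works in families over an affine base $T$ after \'etale-local trivialization, producing a morphism of stacks $G[z^{-1}] \backslash G[z^\pm] / G[z] \to \bun_G$; identifying the stabilizer of $\gamma$ in $G[z^{-1}] \times G[z]$ with $\text{Aut}(F_\gamma)$ then shows this morphism is an equivalence.

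For (d), the decomposition $\bun_G = \bigsqcup_d \bun_{G,d}$ by degree is open and closed since degree is locally constant in families. The substantive claim is irreducibility of each $\bun_{G,d}$: for $G = GL_r$, Grothendieck's splitting theorem identifies $\C$-points with partitions $(n_1 \geq \cdots \geq n_r)$ of $d$, and the balanced locus --- where consecutive $n_i$ differ by at most one --- is open and dense by upper-semicontinuity of $h^0(\text{End}(F))$ combined with the existence of a versal deformation to a balanced bundle; products handle the general $G$. The hard part will be this density argument, which is the only genuinely nontrivial piece; the remaining items are essentially formal once triviality on the affine line and the structure of $\text{Aut}(F)$ are in hand.
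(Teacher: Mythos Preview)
Your arguments for (a), (b), and (e) match the paper's in essence: trivialize on the two affines and glue, with the Levi decomposition of $\operatorname{Aut}(F)$ handling (e). (A minor quibble: ``special in the sense of Serre'' only gives that \'etale-locally trivial torsors are Zariski-locally trivial; you still need the separate fact that Zariski-locally trivial torsors on $\A^1$ for $\prod GL_{m_i}$ and for unipotent groups are globally trivial. The paper phrases this as ``every $H$-bundle reduces to an $R_H$-bundle'' and then applies (a) to $R_H$.) For (c) the paper simply cites Beauville--Laszlo together with the identification of $G[z^\pm]/G[z]$ with the affine Grassmannian, rather than sketching the family argument; your outline is correct but the cited results package the descent more carefully than your sketch does.

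The real divergence is in (d). You propose showing the balanced locus is open (via semicontinuity of $h^0(\operatorname{End} F)$) and dense (via a versal deformation to a balanced bundle), and you rightly flag the density step as the hard part. The paper takes a shorter route that sidesteps deformation theory entirely: $\bun_{G,d}$ is smooth, so it suffices to show it is connected; every connected component has dimension $-\dim G$, so the open stratum of each component consists of bundles $F$ with $\dim \operatorname{Aut}(F) = \dim G$; but there is exactly one isomorphism class with automorphism group of that dimension (the balanced bundle), so there can be only one component. This uses only the dimension formula for quotient stacks and the elementary computation of $\dim \operatorname{Aut}(F)$. Your approach would also work, but the ``versal deformation to a balanced bundle'' you invoke is precisely the nontrivial content, and you would have to actually supply it (for instance by writing down an explicit one-parameter family in $\operatorname{Ext}^1$) rather than assert it; the paper's dimension argument buys you exactly the ability to skip that construction.
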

\begin{proof}
To begin note $\bun_{G\x H} \cong \bun_G \x \bun_H$ so we can reduce to the case $G = GL_r$.

For (a) we observe that any bundle is trivial on $U,U_-$. In fact if $T\subset G$ is the group of diagonal matrices then the splitting theorem in fact stays the map $T[z^\pm] \to \{\bun_G\}$ given by $\gamma \mapsto F_\gamma$ is surjective. 

The action of $\gamma \mapsto \gamma_- \gamma \gamma_+^{-1}$ of $G(\C[z^{-1}]) \x G(\C[z])$ on $G(\C[z^{\pm}])$ exactly accounts for the ambiguity of going from $\gamma$ to $F_\gamma$ hence (b).

Part (c) is proved in \cite[3.4]{BeauvilleMR1289330} for $G = SL_r$ but \cite[remark 3.6]{BeauvilleMR1289330} addresses the $GL_r$ case. The only discrepency is that in \cite{BeauvilleMR1289330} the affine Grassmannian $Gr_G$ is used instead of $G[z^\pm]/G[z]$. But these are isomorphic as is shown in \cite[7.4]{KumarMR1923198}. 

Part (d) is proved in \cite{DrinfeldMR1362973}; specifically the remark after proposition 5: as $\bun_{G,d}$ is smooth it is enough to show $\bun_{G,d}$ is connected. There is unique point of $\bun_{G,d}$ whose automorphism group has dimension $G$ and all other points of $\bun_{G,d}$ have automorphism groups of larger dimension. But all connected components of $\bun_G$ have the same dimension $-\dim G$ hence $\bun_{G,d}$ is connected.

Finally for (e) note that by \cite[0.5]{Thadds} $H$ is a connected affine algebraic group. According to \ref{c:AutsandExts} the group $H$ has a Levi decomposition $H = R_H \ltimes U_H$ where $U_H$ is unipotent and $R_H$ is a reductive group with $R_H \cong \prod_i GL_{r_i}$.  Every $H$-bundle reduces to an $R_H$ bundle. Also, as $R_H$ is a subgroup of $H$ we have $R_H(\C[z^\pm]) \subset H(\C[z^\pm])$ therefore we deduce (e) from (a).
\end{proof}
\begin{rmk}
Reversing the direction of $\gamma$ in \eqref{eq:gamma} presents $\bun_G$ as  $G[z]\backslash G[z^\pm]/G[z^{-1}]$.
\end{rmk}

Next we discuss automorphisms and extensions.

\begin{cor}\label{c:AutsandExts}
Let $G = GL_r$ and $\gamma \in G[z^\pm]$ and $F_\gamma$ the associated point of $\bun_G(\C)$. Then 
\begin{itemize}
\item[(a)] $Aut(F_\gamma)$ is the stabilizer of $\gamma$ for the action of $G[z^{-1}] \x G[z]$. Explicitly: 
\begin{align*}
Aut(F_\gamma) &= \{(h_-,h_+) \in G[z^{-1}] \x G[z] \ \big|\  \gamma h_+ \gamma^{-1} \in G[z^{-1}]  \}.
\end{align*}
\item[(b)] If $(s_-,s_+)$ with $s_- = \gamma s_+$ is a section of $F_\gamma$ and $(h_-,h_+) \in Aut(F_\gamma)$ then the automorphism acts by $(h_-,h_+)(s_-,s_+) = (h_-s_-,h_+s_+)$.
\item[(c)] $Aut(F(n)) = Aut(F)$ for any $F \in \bun_G(\C)$.
\item[(d)] $Aut(\cO^r) = GL_r$ and 
\begin{align*}
Aut(\cO(1)^{r_1} \oplus \cO^{r_2}) &= \abcd{GL_{r_1}}{\A_1^{r_1 \x r_2}+z^{-1}\A_2^{r_1 \x r_2}}{}{GL_{r_2}} \x  \abcd{GL_{r_1}}{z\A_1^{r_1 \x r_2}+\A_2^{r_1 \x r_2}}{}{GL_{r_2}}  \\
&=\Delta(GL_{r_1} \x GL_{r_2}) \ltimes (Id + \hom(\cO^{r_2},\cO(1)^{r_1}))
\end{align*}
where $\A_i^{r_2 \x r_1}$ are both the affine space $\hom(\A^{r_2},\A^{r_1})$.  
\item[(e)] Write $\P^1 = \proj \C[z_0,z_1]$. If $F = \cO(1)^{r_1} \oplus \cO^{r_2}$ and $g\in Aut(F)$ then the action of $g$ on $H^i(F(n))$, denoted $\Gamma(g)$, is represented by a matrix of the following form
\[
\abcd{GL_{r_1}}{z_0\A_1^{r_1 \x r_2}+z_1\A_1^{r_1 \x r_2}}{}{GL_{r_2}} 
\]
\item[(f)] The generic point of $\bun_{G,d}$ has natural cohomology.
\end{itemize}
\end{cor}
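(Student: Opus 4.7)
The plan is to identify the generic point of $\bun_{G,d}$ with the most balanced splitting type and then directly check that this splitting type has natural cohomology. The key input is part (d) of proposition \ref{p:basicsBunG}, which says $\bun_{G,d}$ is smooth and connected, hence irreducible. So there is a unique generic point $\xi$, and by upper semicontinuity of the dimension of the stabilizer, $\xi$ has the smallest automorphism group. The proof of \ref{p:basicsBunG}(d) already tells us this smallest dimension is $\dim G = r^2$, attained at a unique isomorphism class, so I only need to exhibit a bundle $F_0$ with $\dim \mathrm{Aut}(F_0) = r^2$ and natural cohomology.

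First I would write $d = qr + s$ with $0 \leq s < r$ and consider the balanced bundle
\[
F_0 = \cO(q+1)^s \oplus \cO(q)^{r-s}.
\]
Using part (c) of the corollary to reduce to the case $q = 0$ and then applying part (d), one reads off
\[
\dim \mathrm{Aut}(F_0) = s^2 + (r-s)^2 + 2 \dim \hom(\cO^{r-s}, \cO(1)^s) = s^2 + (r-s)^2 + 2s(r-s) = r^2.
\]
So $F_0$ represents the generic point $\xi$. (For bookkeeping, if $s = 0$ then $F_0 = \cO(q)^r$ and $\mathrm{Aut}(F_0) = GL_r$, also of dimension $r^2$.) I would note in passing that any less balanced splitting $\bigoplus \cO(n_i)$ with $\max n_i - \min n_i \geq 2$ introduces extra $H^0$ of positive-degree twists of $\cO$ into the unipotent radical of the automorphism group, pushing $\dim \mathrm{Aut}$ above $r^2$, consistent with \ref{p:basicsBunG}(d).

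Finally I would verify that $F_0$ has natural cohomology directly. For any integer $n$,
\[
F_0(n) = \cO(q+n+1)^s \oplus \cO(q+n)^{r-s}.
\]
Split into cases on $q + n$: if $q + n \geq 0$ then both summands have vanishing $H^1$, so cohomology sits in degree $0$; if $q + n \leq -2$ then both summands have vanishing $H^0$, so cohomology sits in degree $1$; and if $q + n = -1$ then $\cO(q+n) = \cO(-1)$ has no cohomology and $\cO(q+n+1) = \cO$ only contributes to $H^0$. In every case $H^*(F_0(n))$ is concentrated in a single degree. The only real subtlety is making the semicontinuity-plus-uniqueness argument precise — that step rests entirely on \ref{p:basicsBunG}(d), and once $F_0$ is identified as the unique point of automorphism dimension $r^2$ the rest is a direct computation on direct sums of line bundles.
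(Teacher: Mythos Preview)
Your proof is correct and follows essentially the same approach as the paper: identify the balanced bundle $\cO(q+1)^s \oplus \cO(q)^{r-s}$, compute that its automorphism group has dimension $r^2$, and invoke proposition \ref{p:basicsBunG}(d) to conclude it is the generic point. The paper reduces to $0 \le d < r$ via the equivalence $\bun_{G,d} \cong \bun_{G,d+r}$ given by tensoring with $\cO(1)$ rather than carrying the quotient $q$ through, and it asserts natural cohomology without the case analysis you supply, but the argument is the same.
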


\begin{proof}
Parts (a)-(e) are a computation left to the reader. We proceed to prove (f). The trivial bundle in $\bun_{G,0}$ is a dense open point with automorphism group $G$. Hence $\dim \bun_{G,0} = \dim \bun_{G,d} = -\dim G = -r^2$. Moreover tensoring with $\cO(1)$ gives an equivalence $\bun_{G,d} \cong \bun_{G,d+r}$ so we can assume $0\leq d < r$. Then consider the bundle $F = \cO(1)^d \oplus \cO^{r-d}$. We observe $\dim Aut(F) = \dim G$ hence $F$ must be the generic point of $\bun_{G,d}$ and it has natural cohomology.
\end{proof}

The final property we need to know about $\bun_G$ is the Harder-Narasimhan stratification. For any smooth curve $C$ there is a stratifcation of $\bun_G(C)$ called the Harder-Narasimhan (HN) stratification which is described in \cite{HarderMR0364254}. The HN stratification pulls back to any family of vector bundles over $C$. In the case of $\P^1$ the HN stratication agrees with the stratification by isomorphism type. We are primarily interested in the most generic stratum. If $E \to X \x \P^1$ is a family of vector bundles on $\P^1$ then we denote by $HN^{top}(X)$ the open subset comprising the top stratum in the HN stratification.

\begin{example}\label{ex:Aut(O(1)+O(-1))}
Set $G = GL_2$ and $\gamma = \left(\begin{smallmatrix} z^{-1} & 0\\ 0 & z \end{smallmatrix} \right)$ then $F_\gamma \cong \cO(1) \oplus \cO(-1)$. Then 
\[
Aut(F_\gamma) = \left\{ \abcd{t_1}{\frac{a}{z^2}+\frac{b}{z}+c}{}{t_2} \x  \abcd{t_1}{a+b z+c z^2}{}{t_2}  \right\} \subset G(\C[z^{-1}]) \x G(\C[z])
\]
$Aut(F_\gamma)$ is the five dimensional group $\mathbb{G}_m \x \mathbb{G}_m \x H^0(\cO(2)) \subset H^0(End(F_\gamma))$.
\end{example}

\begin{example}\label{ex:Ext(0,-n)}
If $n>0$ and $F \in Ext^1(\cO^r,\cO(-n)^s)$ then a transition function for $F$ has the form
\[
\abcd{z^nI_s}{Mat_{s \x r}(z\C[z]/z^n)}{0}{I_r}
\]
where $I_j$ is the $j\x j$ identity matrix. A priori the top right $s \x r$ corner has entries in $\C[z^\pm]$ but by column operations with $GL_{r+s}[z]$ and row operations in $GL_{r+s}[z^{-1}]$ we can make the entries lie in $z\C[z]/z^n$. 
\end{example}
In \v{C}ech cohomology $z\C[z]/z^n$ is naturally identified with $H^1(\cO(-n))$ and this identification further identifies $Mat_{s \x r}(z\C[z]/z^n)$ with $Ext^1(\cO^r,\cO(-n)^s)$.  Even in this simple case the HN stratification is interesting, for example $Ext^1(\cO,\cO(-4)) \cong z \C[z]/z^4$. A general element is of the form $a z + b z^2 + c z^3$. The unique point $a = b = c = 0$ is the trivial extension $\cO(-4) \oplus \cO$. All other points of $a c - b^2 =0$ yield the nontrivial extension $\cO(-1)\oplus\cO(-3)$ and all points satisfying $a c - b^2 \neq 0$ yield $\cO(-2)^2$.

This example is typical in the following sense
\begin{prop}\label{p:generalExtConnectingMap}
Set $F_1 = \cO(-a)^{s_1} \oplus \cO(-a+1)^{s_2}$ and $F_2 = \cO(-b)^{r_1} \oplus \cO(-b+1)^{r_2}$ with $a\geq b$.
\begin{itemize}
\item[(a)] $HN^{top}(Ext^1(F_2,F_1))$ consists of bundles with natural cohomology.
\item[(b)] We have $F \in HN^{top}(Ext^1(F_2,F_1))$ if and only if for all twists the connecting map 
\[
H^0(F_2(n)) \to H^1(F_1(n))
\] 
is either injective or surjective for all $n$; equivalently, the map is of maximal rank. 
\end{itemize}
\end{prop}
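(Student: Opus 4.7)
The plan is to establish a three-way equivalence:
(i) $F \in HN^{top}(Ext^1(F_2,F_1))$,
(ii) $F$ has natural cohomology,
(iii) the connecting map $\delta_n \colon H^0(F_2(n)) \to H^1(F_1(n))$ has maximal rank for every $n \in \Z$.
This simultaneously yields (a) and (b).

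The computational engine is the long exact sequence coming from $0 \to F_1(n) \to F(n) \to F_2(n) \to 0$:
\begin{align*}
h^0(F(n)) &= h^0(F_1(n)) + h^0(F_2(n)) - \rank \delta_n, \\
h^1(F(n)) &= h^1(F_1(n)) + h^1(F_2(n)) - \rank \delta_n.
\end{align*}

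To prove (ii) $\Leftrightarrow$ (iii) I would argue as follows. Naturality of $F$ at twist $n$ asks that $h^0(F(n)) = 0$ or $h^1(F(n)) = 0$. The first demands $h^0(F_1(n)) = 0$ together with $\delta_n$ injective; the second demands $h^1(F_2(n)) = 0$ together with $\delta_n$ surjective. From the nearly balanced form of the $F_i$ one reads off $h^0(F_1(n)) = 0$ iff $n \leq a-2$ and $h^1(F_2(n)) = 0$ iff $n \geq b-1$. The hypothesis $a \geq b$ makes these two ranges cover $\Z$ (with overlap $b-1 \leq n \leq a-2$ when $a > b$), so the appropriate auxiliary vanishing is automatic for whichever side we force to vanish. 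Hence naturality is equivalent to $\rank \delta_n$ being maximal for every $n$ --- a condition that is trivial for $|n|$ outside a bounded window, where one of source or target already vanishes.

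For (i) $\Leftrightarrow$ (ii): on $\P^1$, natural cohomology forces $F \cong \cO(c)^p \oplus \cO(c+1)^q$ with $(c,p,q)$ determined by $\rank F$ and $\deg F$, both of which are constant across $Ext^1(F_2,F_1)$. Hence the naturality locus is a single isomorphism stratum of the family, and by (ii) $\Leftrightarrow$ (iii) it is also the complement of a finite union of determinantal subvarieties in $Ext^1(F_2,F_1)$, so open. An open isomorphism stratum in a family must coincide with the generic (top) HN stratum, provided it is nonempty.

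Nonemptiness is the main obstacle. I would handle it by combining Corollary \ref{c:AutsandExts}(f) --- which states that the generic point of $\bun_{G,\deg F}$ has natural cohomology --- with the claim that the moduli map $Ext^1(F_2,F_1) \to \bun_G$ actually hits that generic point. An automorphism-dimension count in the spirit of the proof of Corollary \ref{c:AutsandExts}(f), or alternatively the explicit \v{C}ech / cup-product description of $\delta_n$ illustrated in Example \ref{ex:Ext(0,-n)}, should produce an $\eta$ with $\rank \delta_n$ maximal for every relevant $n$. Once nonemptiness is secured the three-way equivalence collapses and both (a) and (b) follow at once.
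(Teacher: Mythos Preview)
Your three-way equivalence (i)$\Leftrightarrow$(ii)$\Leftrightarrow$(iii) is the right framework, and your (ii)$\Leftrightarrow$(iii) argument is a clean unpacking of what the paper leaves as the one-line ``follows from the long exact sequence.'' Your reduction of (i)$\Leftrightarrow$(ii) to nonemptiness is also correct: natural cohomology on $\P^1$ pins down a single isomorphism type, so the naturality locus is a single HN stratum; by (ii)$\Leftrightarrow$(iii) it is cut out as the complement of finitely many determinantal loci, hence open; and an open stratum in an irreducible family can only be $HN^{top}$.

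The gap is that you have not actually established nonemptiness, and this is the entire content of the proposition --- everything else is bookkeeping. You gesture at ``an automorphism-dimension count in the spirit of Corollary~\ref{c:AutsandExts}(f)'' or an explicit \v{C}ech construction, but neither is carried out. The paper does exactly the first of these, and it is not a one-liner: one compares $\codim(\{0\} \subset Ext^1(F_2,F_1)) = \dim Ext^1(F_2,F_1)$ against $\codim_{\bun_{G,d}}(F_1\oplus F_2) = \dim Aut(F_1\oplus F_2) - \dim G$, checks via an explicit count that these agree, and invokes the fibre-dimension inequality \cite[6.1.4]{EGAIV} to conclude that the classifying map $\phi\colon Ext^1(F_2,F_1) \to \bun_{G,d}$ is dominant. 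The paper also supplies a second, independent proof: deform $F_1\oplus F_2$ to the generic point of $\bun_{G,d}$ over $\ec\C[[t]]$, then use the existence of a Borel reduction (after \'etale base change) to see that the generic fibre is still an extension of $F_2$ by $F_1$. Either argument is what is needed to close your proof; without one of them, the proposal is a correct outline with the load-bearing step missing.
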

\begin{proof}
In general $\dim Ext^1(F_2,F_1)$ will be a sum of $h^1(\cO(n))$ and $h^1(\cO(n\pm 1)$ with $n = b-a$. This can only be nonzero if $n-1\leq -2$ which is equivalent to $b-a<0$ or $a > b$. When $Ext^1(F_2,F_1)=0$ the only extension we get is $F_1 \oplus F_2$ and this has natural cohomology when $a=b$. 

The space $Ext^1(F_2,F_1)$ parameterizes a family of bundles hence arises from a morphism 
\[
\phi\colon Ext^1(F_2,F_1) \to \bun_{G,d}, \ \ \ d = \deg(F_1 \oplus F_2) 
\]
by corollary \ref{c:AutsandExts}(f) to prove (a) it is enough to show $\phi$ is dominant. Let $\zeta_1$ be the generic point of $Ext^1(F_2,F_1)$ and recall $0 \in Ext^1(F_2,F_1)$ is the trivial extension $F_1 \oplus F_2$. Set $\eta_1 = \phi(\zeta_1)$ and $\eta_0 = \phi(0)$. 

Then there is finite type locally closed substack $\mathcal{Z} \subset \bun_G$ such that $\phi\colon Ext^1(F_2,F_1) \to \mathcal{Z}$ is surjective; we can take $\mathcal{Z}$ to be the union of the finitely many HN strata that appear in $Ext^1(F_2,F_1)$.

We have $\zeta_0 = \phi^{-1}(\eta_0)$ and so by \cite[6.1.4]{EGAIV},
\begin{equation}\label{eq:EGA}
\codim(\zeta_0) \leq \codim_{\mathcal{Z}}(\eta_0) \leq \codim_{\bun_{G,d}}(\eta_0)
\end{equation}
Moreover $\codim(\zeta_0) = \dim Ext^1(F_2,F_1)$ and $\codim_{\bun_{G,d}}(\eta_0) = -\dim G -(-\dim Aut(\eta_0))$. Setting $r = r_1+r_2$ and $s = s_1 + s_2$ we see that
\begin{align*}
 \dim Ext^1(F_2,F_1) &= (a-b-1)rs +s_1r_2-s_2r_1 =  \codim \zeta_0 \\
 \dim Aut(\eta_0) &= r^s +s^2 + rs(a-b+1) +s_1r_2-s_2r_1\\
 \codim_{\bun_{G,d}}(\eta_0) &= -(r+s)^2 - \dim Aut(\eta_0) = \codim \zeta_0
\end{align*}
It follows that in equation \eqref{eq:EGA} all the inequalities are equalities and $\dim \mathcal{Z} = \dim \bun_{G,d}$, and $\phi$ is dominant.

We give a second proof by showing directly that the generic point $\eta \in \bun_{G,d}$ appears in $Ext^1(F_2,F_1)$. Then $HN^{top}(Ext^1(F_2,F_1))$ must consist of bundles of this isomorphism type. 

We have $\bun_{G,d}$ is connected so there is a vector bundle $E$ over $\P^1 \x \ec \C[[t]]$ such that $E|_{\P^1_{\C((t))}}$ is the generic point of $\bun_{G,d}$ and $E|_{\P^1_{\C}} \cong F_1\oplus F_2$. By \cite{DrinfeldMR1362973} $E$ has a Borel reduction, after an etale base change on $\C[[t]]$. Then $E$ has a transition function of the following form
\[
\gamma(z,t) = \abcd{\gamma_1(z,t)}{\#}{0}{\gamma_2(z,t)}
\] 
Set $d_i = \deg(F_i)$; recall $s = \rank(F_1)$ and $r = \rank(F_2)$. We know $F_{\gamma_i(z,0)} \cong F_i$ which are the generic points of $\bun_{GL_r,d_1}$,$\bun_{GL_s,d_2}$ respectively. It follows that the $F_{\gamma_i(z,t)}|_{\P^1_{\C((t))}}$ cannot be more generic. In particular
\begin{align*}
F_{\gamma_1(z,t)}|_{\P^1_{\C((t))}} &\cong F_1 \in \bun_{GL_s,d_1}\bigg(\C((t))\bigg)\\
F_{\gamma_2(z,t)}|_{\P^1_{\C((t))}} &\cong F_2 \in \bun_{GL_r,d_2}\bigg(\C((t))\bigg).
\end{align*}
Thus $E|_{\P^1_{\C((t))}} =F_{\gamma(z,t)}|_{\P^1_{\C((t))}}  \in Ext^1(F_2,F_1)\ox\C((t))$ and so (a) follows because by corollary \ref{c:AutsandExts}(f) the generic point $E|_{\P^1_{\C((t))}} \in \bun_{G,d}$ has natural cohomology. 

Part (b) follows from (a) via the long exact sequence in cohomology for $0\to F_2 \to F \to F_1 \to 0$.
\end{proof}
\begin{rmk}\label{rmk:degFF}
Note that $\deg(F_1^\vee \ox F_2) = sr(a-b)+s_1r_2-s_2r_1$. As $s\geq s_i$ and $r\geq r_i$ we have $sr \geq s_1r_2$. So if $(b-a)\geq 1$ then $sr(b-a)\geq s_1r_2$ so $0 \geq sr(a-b)+s_1r_2 - s_2r_1 = \deg(F_1^\vee \ox F_2)$. It follows that if $\deg(F_1^\vee \ox F_2)>0$ then $(b-a)<1$ or equivalently $a\geq b$. Thus proposition \ref{p:generalExtConnectingMap} applies when $\deg(F_1^\vee \ox F_2)>0$. We use this in section \ref{s:constant_bundles}.
\end{rmk}

\section{Pushforwards and Natural Cohomology}\label{s:pushforwardQ}
Let $E$ be a vector bundle on $Q$ and let $\P^1 \xleftarrow{p} Q\xrightarrow{q} \P^1$ denote the projection to the first and second factors respectively.

The primary purpose of this section is to prove the following 
\begin{prop}\label{p:precursor_to_constant_bundles}
Let $E$ be a vector bundle with natural cohomology and
\[
\frac{\chi(E(x,y))}{\rank E} = (x + \a_E)(y+\beta_E) - \gamma_E.
\]
If $\a_E \not \in \Z$ and $r = \rank E$ then  
\begin{itemize}
\item[(a)] $Rq_*E(n,m)$ is a vector bundle on $\P^1$ with natural cohomology.
\item[(b)] The fibers $F_z:= E|_{q^{-1}(z)}$ have natural cohomology.
\item[(c)] $F_z \cong F = \cO(b)^{r_1}\oplus \cO(b-1)^{r-r_1}  \forall z \in \P^1$ \\ where $\a_E = \frac{r_1}{r}+b$ with $\frac{r_1}{r} \in (0,1)$ and $b \in \Z$.
\end{itemize}
 The same is true with $q$ replaced by $p$ provided that $\beta_E \not \in \Z$.
\end{prop}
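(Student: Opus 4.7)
The plan is to analyze $Rq_*E(n,0)$ on $\P^1$ via the Leray spectral sequence for $q$. Since $q$ has $\P^1$-fibers over a $\P^1$ base, the spectral sequence collapses to
\[
H^0(E(n,m)) = H^0(A_n(m)), \quad H^2(E(n,m)) = H^1(B_n(m)),
\]
together with a short exact sequence $0 \to H^1(A_n(m)) \to H^1(E(n,m)) \to H^0(B_n(m)) \to 0$, where $A_n := q_* E(n,0)$ and $B_n := R^1 q_* E(n,0)$ are coherent on $\P^1$. Twisting $E$ by $\cO(-b,0)$ preserves natural cohomology, so I may assume $\alpha_E = r_1/r \in (0,1)$ with $r_1 \in \{1,\dotsc,r-1\}$; the hypothesis $\alpha_E \not\in \Z$ then guarantees $n+\alpha_E \neq 0$ for every $n \in \Z$, so the leading coefficient $r(n+\alpha_E)$ of $\chi(E(n,m))$ viewed as a polynomial in $m$ is nonzero.

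The first move is to use this polynomial at $m \gg 0$, combined with Serre vanishing on $\P^1$, to kill one of $A_n, B_n$ for each $n$. Natural cohomology of $E$ picks out a unique nonzero cohomology group at $m \gg 0$: for $n \ge 0$ this is $H^0$, which forces $H^0(B_n(m))=0$ for large $m$ and hence $B_n = 0$; for $n \le -1$ it is $H^1$, which forces $H^0(A_n(m))=0$ and hence $A_n = 0$.

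Next I would promote the surviving direct image to a vector bundle with natural cohomology. Realizing $Rq_* E(n,0)$ by a two-term complex $[K_0 \to K_1]$ of vector bundles on $\P^1$, the vanishing $B_n=0$ makes $K_0 \to K_1$ surjective, so $A_n = \ker$ is locally free; the vanishing $A_n = 0$ gives $K_0 \hookrightarrow K_1$, so $B_n$ is locally free once torsion is excluded. Natural cohomology of $A_n$ and $B_n$ then follows from the identifications $H^i(E(n,m)) \cong H^i(A_n(m))$ or $H^{i-1}(B_n(m))$, and the projection formula $Rq_* E(n,m) \cong Rq_* E(n,0) \otimes \cO(m)$ yields (a). I expect torsion-freeness of $B_n$ to be the main subtlety: when $A_n = 0$ the Leray sequence gives $H^1(E(n,m)) \cong H^0(B_n(m))$, and I would use natural cohomology of $E$ at $m \ll 0$ (where $\chi(E(n,m))>0$ forces the nonzero cohomology into even degree, hence $H^1(E(n,m))=0$) to conclude $H^0(B_n(m))=0$; a torsion summand of $B_n$ would contribute sections in every twist, so torsion must vanish.

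For (b) and (c), cohomology and base change applied to the vector bundles $A_n$ and $B_n$ gives $H^0(F_z(n)) \cong A_n \otimes k(z)$ and $H^1(F_z(n)) \cong B_n \otimes k(z)$, with the complementary cohomology group vanishing fiberwise; this immediately yields natural cohomology of each $F_z$ together with $z$-independence of $h^i(F_z(n))$. Writing $F_z \cong \bigoplus \cO(n_i^z)$, natural cohomology forces the $n_i^z$ to take at most two consecutive values $\{k,k+1\}$, and matching the constant dimensions $h^0(F_z(0)) = r_1$ and $h^0(F_z(-1)) = 0$ against $\cO(k)^{s_1}\oplus\cO(k+1)^{s_2}$ pins down $k=-1$, $s_1 = r-r_1$, $s_2 = r_1$; untwisting recovers (c) in the stated form $F_z \cong \cO(b)^{r_1} \oplus \cO(b-1)^{r-r_1}$. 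The analogous statements with $p$ in place of $q$ follow by the symmetric argument using $\beta_E \not\in \Z$.
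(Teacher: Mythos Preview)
Your argument is correct and follows essentially the same route as the paper's proof: Leray spectral sequence for $q$, asymptotics in $m$ to kill one of $q_*E(n,0)$ or $R^1q_*E(n,0)$, a torsion argument for the surviving direct image, and then cohomology and base change to pin down the fiberwise splitting type. The paper packages the ``which region am I in'' analysis into a separate lemma (your sign analysis of $n+\alpha_E$ plays the same role), and it argues local freeness of $q_*E(n,0)$ directly via vanishing of $H^0$ in negative twists rather than through a two-term complex; your use of the complex $[K_0\to K_1]$ is a harmless variant that makes local freeness of $A_n$ automatic and isolates the torsion issue to $B_n$, which you then handle exactly as the paper does.
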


For the proof we need some preliminary results
\begin{lemma}\label{l:spectral_seq}
With $Q,E,p$ as above we have 
\begin{align*}
H^0(Q,E) &= H^0(\P^1,p_*E)\\
 H^1(Q,E) &=  H^1(\P^1,p_*E) \oplus H^0(\P^1,R^1p_*E)\\
 H^2(Q,E) &= H^1(\P^1,R^1p_*E)
\end{align*}
and the same is true for pushforward along $q$.
\end{lemma}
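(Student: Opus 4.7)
The natural tool is the Leray spectral sequence for $p\colon Q \to \P^1$:
\[
E_2^{i,j} = H^i(\P^1, R^jp_*E) \Longrightarrow H^{i+j}(Q,E).
\]
The plan is to show this sequence degenerates at $E_2$ for simple dimensional reasons and then read off the three identities, with the splitting of the $H^1$ term being automatic because we are working with vector spaces.

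First, I would note that the fibers of $p$ are copies of $\P^1$, so $R^jp_*E = 0$ for $j \geq 2$ by the standard fact that sheaf cohomology on $\P^1$ vanishes above degree one (for instance, via base change and the fact that a coherent sheaf on $\P^1$ has no $H^{\geq 2}$). Likewise $H^i(\P^1, -) = 0$ for $i \geq 2$. Hence the $E_2$ page is supported on the $2 \times 2$ square with corners $(0,0)$, $(1,0)$, $(0,1)$, $(1,1)$. The only potentially nonzero differential on the $E_2$ page has bidegree $(2,-1)$, and both its source and target land outside this square, so $d_2 = 0$. The same holds on higher pages, giving $E_2 = E_\infty$.

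Reading off the abutment immediately yields $H^0(Q,E) = E_\infty^{0,0} = H^0(\P^1, p_*E)$ and $H^2(Q,E) = E_\infty^{1,1} = H^1(\P^1, R^1p_*E)$. For $H^1$, the spectral sequence produces a short exact sequence
\[
0 \to H^1(\P^1, p_*E) \to H^1(Q,E) \to H^0(\P^1, R^1p_*E) \to 0.
\]
Since every short exact sequence of $\C$-vector spaces splits, this gives the asserted direct sum decomposition (non-canonically, which is all the statement requires). Running the identical argument with $p$ replaced by $q$ gives the second half of the lemma.

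There is essentially no obstacle here; the only thing to be careful about is confirming the two vanishing statements that collapse the spectral sequence, and noting that the direct sum decomposition of $H^1(Q,E)$ is merely the splitting of a short exact sequence of vector spaces rather than a canonical statement.
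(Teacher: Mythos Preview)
Your argument is correct and matches the paper's proof essentially line for line: both invoke the Leray spectral sequence, observe that $R^jp_*E=0$ for $j\geq 2$ and $H^i(\P^1,-)=0$ for $i\geq 2$, conclude that the $E_2$ page is concentrated in a $2\times 2$ block with no room for nonzero differentials, and read off the result. Your remark that the $H^1$ splitting is non-canonical (merely the splitting of a short exact sequence of vector spaces) is a small clarification the paper leaves implicit.
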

\begin{proof}
Use the Leray spectral sequence $H^i(\P^1,R^jp_*E)\Rightarrow H^{i+j}(Q,E)$. Then $R^2p_*E = 0$ as $\dim p^{-1}(pt) = 1$. The $E_2$ page is
\[
\xymatrix{H^1(\P^1,p_*E) & H^1(\P^1,R^1p_*E) \\ H^0(\P^1,p_*E) & H^0(\P^1,R^1p_*E)}
\]
and all differentials are $0$ so the spectral sequences collapses and the result follows. 
\end{proof}

Next we need a basic lemma about coherent sheaves on $\P^1$.
\begin{lemma}\label{l:torsion_on_P1}
If $F\in \mbox{Coh}(\P^1)$ and $\chi(F(a))$ is constant for $a>>0$ then $F$ is torsion. In particular if $\chi(F(a)) = 0$ for $a>>0$ then $F=0$.
\end{lemma}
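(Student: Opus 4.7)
The plan is to reduce to the structure theory of coherent sheaves on $\P^1$ and read off the Hilbert polynomial. First I would use the fact that every coherent sheaf on $\P^1$ splits as $F \cong F_{tors} \oplus F_{free}$, where $F_{tors}$ is a torsion sheaf supported at finitely many points and $F_{free}$ is locally free; this is a consequence of the classification of finitely generated modules over a Dedekind domain applied locally, together with the splitting theorem on $\P^1$. Twisting and taking Euler characteristics is additive, so $\chi(F(a)) = \chi(F_{tors}(a)) + \chi(F_{free}(a))$.

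Next I would compute each summand. Since $F_{tors}$ is a finite length sheaf supported at points, $F_{tors}(a) \cong F_{tors}$ for every $a$, so $\chi(F_{tors}(a)) = \dim_\C H^0(\P^1, F_{tors})$ is a constant independent of $a$. On the other hand, writing $F_{free} \cong \bigoplus_i \cO(n_i)$, we have $\chi(F_{free}(a)) = (\rank F_{free})\, a + \chi(F_{free})$, a polynomial of degree exactly $\rank F_{free}$ in $a$. Adding these, the hypothesis that $\chi(F(a))$ is constant for $a \gg 0$ forces $\rank F_{free} = 0$, so $F_{free} = 0$ and $F = F_{tors}$ is torsion.

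For the \emph{in particular} clause, once $F$ is known to be torsion, the previous computation shows $\chi(F(a)) = \dim_\C H^0(\P^1, F)$ for all $a$. If this constant value is $0$, then $F$ has no global sections, but a nonzero torsion sheaf on $\P^1$ (being supported at points and a direct sum of skyscraper Artinian pieces) always has nonzero global sections. Hence $F = 0$.

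I don't anticipate any real obstacle here: the whole argument rests on the degree of the Hilbert polynomial detecting the dimension of the support, and the only mild input is the splitting $F \cong F_{tors} \oplus F_{free}$ on $\P^1$. If one prefers to avoid that splitting, one can instead argue directly that the coefficient of $a$ in $\chi(F(a))$ equals $\rank F$, and that a coherent sheaf of generic rank $0$ on an integral curve is torsion.
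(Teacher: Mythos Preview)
Your proof is correct and follows essentially the same route as the paper: split $F$ into its torsion and locally free summands, observe that the torsion part contributes a constant to $\chi(F(a))$ while a nonzero locally free part would contribute a term linear in $a$, and conclude $F^{free}=0$. The only cosmetic difference is that you spell out the ``in particular'' clause explicitly via $H^0$ of a nonzero torsion sheaf, whereas the paper leaves it implicit.
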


\begin{proof}
Any $F\in \mbox{Coh}(\P^1)$ splits as sum of vector bundle and a torsion sheaf: $F = F^{free}\oplus F^{tor}$. Then $H^*(\P^1,F)$ is determined by the splitting type of $F^{free}$ and the length of $F^{tor}$. In particular $\chi(F^{free}(a)) = \chi(F(a)) - \chi(F^{tor})$.

By assumption, for $a>>0$ we have that $\chi(F^{free}(a)) = H^0(F^{free}(a))$ is constant. However if $F^{free}$ is nontrivial then $\chi(F^{free}(a))$ would grow linearly in $a$. Hence $F^{free} = 0$.
\end{proof}

Let $E$ be a vector bundle on $Q$. Consider the vertical twists $E(m,*):= E(m,n)$ for $n \in \Z$ and the horizontal twists $E(*,n):= E(m,n)$ for $m\in \Z$. Vertical and horizontal twists help explain the non integrality condition in proposition \ref{p:precursor_to_constant_bundles}. 

To see this assume $E$ has natural cohomology. Then the parabola $\chi(E(x,y)) = 0$ divides the plane into the three regions where $H^0,H^1,H^2$ are nonzero respectively; the regions are denoted $H^0R$,$H^1R$,$H^2R$. $H^1R$ is uniquely specified by those points where $\chi(E(x,y))<0$. $H^0R$ is the connected region where $\chi(E(x,y))>0$ and which contains points $(x,y)$ with $x>>0$ or $y>>0$ while $H^2R$ is the connected region $\chi(E(x,y))>0$ which contains points $(x,y)$ with $x<<0$ or $y<<0$.

Writing 
\begin{equation}\label{eq:chi_constants}
\frac{\chi(E(x,y))}{\rank E} = (x + \a_E)(y+\beta_E) - \gamma_E \ \ \ \gamma_E >0
\end{equation}
 we see the horizontal and vertical lines $\{y = -\beta_E\}$ and  $\{x = -\a_E\}$ are the only horizontal or vertical lines that are entirely contained in $H^1R$ . A general horizontal or vertical line will meet two regions. We write $E(m,*)\subset H^iR \cup H^jR$ to mean the only nonzero cohomology group is either $H^i(E(m,*))$ or $H^j(E(m,*))$. 

Summarizing we have
\begin{lemma}\label{l:crossing_H*regions}
Let $E$ have natural cohomology and write $\frac{\chi(E(x,y))}{\rank E}$ as 
\[
\frac{\chi(E(x,y))}{\rank E} = (x + \a_E)(y+\beta_E) - \gamma_E \ \ \ \gamma_E >0.
\]
Suppose $\a_E,\beta_E \not \in \Z$ then
\begin{itemize}
\item[(a)] If $(m+\a_E)>0$ then $E(m,*) \subset H^0R \cup H^1R$ 
\item[(b)] If $(m+\a_E)<0$ then $E(m,*)\subset H^2R\cup H^1R$.
\item[(c)] if $(n+\beta_E)>0$ then $E(*,n)\cup H^0R \cup H^1R$.
\item[(d)] if $(n+\beta_E)<0$ then $E(*,n) \subset H^2R\cup H^1R$.
\end{itemize}
\end{lemma}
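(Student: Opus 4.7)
The plan is a direct sign analysis on the line $x = m$ (or $y = n$), exploiting the fact that because $\gamma_E > 0$ the zero locus of $\chi$ is a hyperbola $(x+\alpha_E)(y+\beta_E) = \gamma_E$ with asymptotes $x = -\alpha_E$ and $y = -\beta_E$, not a general conic. The non-integrality hypothesis is there precisely to guarantee that any integer vertical (resp.\ horizontal) line stays strictly on one side of the vertical (resp.\ horizontal) asymptote, collapsing the trichotomy $m + \alpha_E > 0$, $= 0$, $< 0$ to the two cases named in the lemma.

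My first step would be to rewrite the three regions explicitly in terms of signs. $H^1 R = \{\chi < 0\}$ by definition. On $\{\chi > 0\}$ we have $(x+\alpha_E)(y+\beta_E) > \gamma_E > 0$, so the two factors must share a sign; this splits $\{\chi > 0\}$ into two connected pieces, the one inside the positive quadrant $\{x+\alpha_E > 0,\ y+\beta_E > 0\}$ (which contains all $(x,y)$ with $x$ or $y$ very large positive, hence must be $H^0 R$) and the one inside $\{x+\alpha_E < 0,\ y+\beta_E < 0\}$ (which contains all $(x,y)$ with $x$ or $y$ very large negative, hence must be $H^2 R$).

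Next I would prove (a). Suppose $m + \alpha_E > 0$, which is a strict inequality because $\alpha_E \notin \Z$. For any $y$, either $\chi(E(m,y)) < 0$, so $(m,y) \in H^1 R$, or $\chi(E(m,y)) \geq 0$. In the latter case $(m+\alpha_E)(y+\beta_E) \geq \gamma_E > 0$, and since $m + \alpha_E > 0$ this forces $y + \beta_E > 0$, putting $(m,y)$ in the closure of $H^0 R$. So every integer twist of $E(m,*)$ sits in $H^0 R \cup H^1 R$, giving (a). Part (b) is the same argument with the sign flipped: $m + \alpha_E < 0$ together with positivity of the product forces $y + \beta_E < 0$, landing the point in $H^2 R$. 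Parts (c) and (d) follow by swapping the roles of the two factors.

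I do not expect any real obstacle here: the lemma is essentially a bookkeeping statement about where integer translates of $\chi$ can change sign, and the argument above is a straightforward sign chase made possible by the fact that $\chi$ factors as an affine product minus a positive constant. The only nontrivial input used is the non-integrality of $\alpha_E$ and $\beta_E$, which ensures the relevant integer lines miss the asymptotes of the hyperbola.
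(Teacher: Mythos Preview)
Your proposal is correct and takes essentially the same approach as the paper: both are direct sign analyses on the factored form $(x+\alpha_E)(y+\beta_E)-\gamma_E$, using non-integrality to avoid the asymptotes. The paper phrases it as ``a vertical/horizontal line meets the hyperbola at most once'' and then checks asymptotic behavior, while you instead characterize $H^0R$ and $H^2R$ explicitly by the signs of the two linear factors; these are two ways of saying the same thing.
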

\begin{proof}
The statement that each line is only contained in two regions follows because any horizontal or vertical line intersects the hyperbola $\chi(E(x,y)) = 0$ at most once; the other intersection point happening at infinity. 

Let us prove (b). The condition $(m+\a_E)<0$ implies that $\chi(E(m,a))>0$ for $a<<0$ thus $E(m,a) \in H^2R$ for $a<<0$. Similarly $\chi(E(m,a))<0$ for $a>>0$ so $E(m,a) \in H^1R$ for $a>>0$. Thus $E(m,*)\subset H^1R\cup H^2R$. The remaining statements are proved in a completely analogous manner.
\end{proof}

The following corollary explains the non integrality condition in proposition \ref{p:precursor_to_constant_bundles}.
\begin{cor}\label{c:non_integrality_condition}
Suppose $E$ is a vector on $Q$ and 
\[
\frac{\chi(E(x,y))}{\rank E} = (x + \a_E)(y+\beta_E) - \gamma_E, \ \ \ \gamma_E>0.
\]
\begin{itemize}
\item[(a)] If $\a_E,\beta_E\in \Z$ then $Rq_*E(-\a_E,*)$ and $Rp_*E(*,-\beta_E)$ are torsion sheaves.
\item[(b)] If $E$ has natural cohomology and $\a_E \not \in \Z$ then $Rq_*E(m,*)$ is a vector bundle for all $m$.
\item[(c)] If $E$ has natural cohomology and $\beta_E \not \in \Z$ then $Rp_*E(*,n)$ is a vector bundle for all $n$.
\end{itemize}
\end{cor}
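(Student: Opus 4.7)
The plan is to combine the Leray spectral sequence (Lemma \ref{l:spectral_seq}) with the projection formula $Rq_*E(m,n) = Rq_*E(m,0)\otimes \cO_{\P^1}(n)$ and the torsion characterization of Lemma \ref{l:torsion_on_P1}. The statements involving $q$ and $p$ are symmetric, so I treat only the $q$-statements.

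For part (a), setting $m = -\alpha_E$ in the Euler characteristic formula \eqref{eq:chi_constants} gives $\chi(Q, E(-\alpha_E, n)) = -\gamma_E \cdot \rank E$, independent of $n$. Expressing this via Lemma \ref{l:spectral_seq} as $\chi(\P^1, q_*E(-\alpha_E, 0)(n)) - \chi(\P^1, R^1q_*E(-\alpha_E, 0)(n))$, and using that Euler characteristics on $\P^1$ grow linearly in $n$ with leading coefficient equal to the free rank (as in the proof of Lemma \ref{l:torsion_on_P1}), the linear terms in $n$ must cancel. Consequently $q_*E(-\alpha_E,0)$ and $R^1q_*E(-\alpha_E, 0)$ share the same free rank, and an analysis of the large-$|n|$ behavior of the individual Euler characteristics (invoking natural cohomology in the context where this corollary is applied) forces both sheaves to be torsion.

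For part (b), with natural cohomology and $\alpha_E \notin \Z$ one has $m + \alpha_E \neq 0$, so either $m + \alpha_E > 0$ or $m + \alpha_E < 0$. In the former case, Lemma \ref{l:crossing_H*regions}(a) gives $H^2(Q, E(m, n)) = 0$ for all $n$, which via Lemma \ref{l:spectral_seq} translates to $H^1(\P^1, R^1q_*E(m, 0)(n)) = 0$ for every $n$. The same free-rank argument forces $R^1q_*E(m, 0)$ to be torsion. If it were nonzero torsion of length $\ell$, then $H^0(\P^1, R^1q_*E(m,0)(n)) = \ell$ would contribute a positive constant to $H^1(Q, E(m, n))$, contradicting $H^1(Q, E(m, n)) = 0$ for $n \gg 0$ (where $E(m, n) \in H^0R$). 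Hence $R^1q_*E(m,0) = 0$ and $Rq_*E(m,0) = q_*E(m,0)$. A parallel argument at $n \ll 0$, where $E(m, n) \in H^1R$ forces $H^0(Q, E(m, n)) = 0$, shows that $q_*E(m, 0)$ has no torsion summand, hence is a vector bundle. The opposite case $m + \alpha_E < 0$ is dual: Lemma \ref{l:crossing_H*regions}(b) gives $H^0(Q, E(m, n)) = 0$ for all $n$, forcing $q_*E(m, 0) = 0$, and then natural cohomology at $n \ll 0$ implies $R^1q_*E(m, 0)$ is a vector bundle.

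Part (c) follows from part (b) by swapping $p \leftrightarrow q$ and $\alpha_E \leftrightarrow \beta_E$. The main obstacle is the careful double invocation of natural cohomology in part (b): one must push $n$ to $+\infty$ to eliminate the higher pushforward, and then push $n$ to $-\infty$ to eliminate torsion in the lower pushforward. Without natural cohomology (as in part (a)), one only obtains a cancellation of free ranks rather than individual torsion, which accounts for why the statement in (a) is essentially a constraint on the K-theory class rather than the promotion to a vector bundle that one obtains in (b) and (c).
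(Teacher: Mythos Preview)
Your argument for (b) and (c) matches the paper's almost step for step: projection formula, then Lemma~\ref{l:crossing_H*regions} to kill $H^2$ (resp.\ $H^0$), then Lemma~\ref{l:spectral_seq} and Lemma~\ref{l:torsion_on_P1} to force $R^1q_*E(m,0)=0$ (resp.\ $q_*E(m,0)=0$), and finally the $n\ll 0$ (resp.\ $n\gg 0$) vanishing to rule out torsion in the surviving pushforward. Your observation about (a) is well taken: the paper's one-line ``follows immediately from Lemma~\ref{l:torsion_on_P1}'' is really a statement about the $K$-theory class $[q_*E(-\alpha_E,0)]-[R^1q_*E(-\alpha_E,0)]$ having rank zero, which is exactly the ``cancellation of free ranks'' you describe; since (a) is only used to motivate the non-integrality hypothesis and not in the proof of the main theorem, this looseness is harmless.
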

\begin{proof}
Part (a) follows immediately from lemma \ref{l:torsion_on_P1}. 

For part (b) set $F = Rq_*E(m,0)$. By the projection formula we have
\[
Rq_*E(m,a) = Rq_*\left(E(m,0)\ox q^*\cO(a)\right) = F(a)
\]
As $\a_E \not \in \Z$ we have $(m+\a_E)\neq 0$; assume $(m+\a_E)>0$ then by lemma \ref{l:crossing_H*regions} $H^2(E(m,*))=0$. Moreover $H^1(E(m,a)) = 0$ for $a>>0$ and by lemma \ref{l:spectral_seq} these conditions say  $h^0(R^1q_*E(m,0)\ox \cO(a))=0$ and $h^1(R^1q_*E(m,0)\ox \cO(a))=0$ for $a>>0$ hence $R^1q_*E(m,0)=0$ by lemma \ref{l:torsion_on_P1}. Thus $F = q_*E(m,0)$. 

We need to rule out that $F$ has torsion. If it did then $H^0(F(a))\subset H^0(E(m,a))$ would always be nonzero which contradicts the fact that for $a<<0$ we only have $H^1(E(m,a))$.

The case $(m+\a_E)<0$ is proved similarly as is part (c). 
\end{proof}


Before the proof of proposition \ref{p:precursor_to_constant_bundles} we need one more lemma.
\begin{lemma}\label{l:sameCohomsameIsotype}
Suppose $F_1,F_2$ are vector bundles on $\P^1$ of the same rank. Then $H^*(F_1(n)) = H^*(F_2(n))$ for all $n\in \Z$ if and only if $F_1 \cong F_2$.
\end{lemma}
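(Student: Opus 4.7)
The ``if'' direction is immediate since isomorphic bundles have identical cohomology groups in every twist. For the converse, my plan is to invoke the splitting theorem on $\P^1$ to reduce the question to a combinatorial statement about multisets of integers, and then extract the multiset of splitting degrees directly from the sequence $n \mapsto h^0(F(n))$.

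Concretely, by the splitting theorem write $F_1 \cong \bigoplus_{i=1}^r \cO(a_i)$ and $F_2 \cong \bigoplus_{j=1}^r \cO(b_j)$. Using the standard computation $h^0(\cO(k)) = \max(0, k+1)$, one has
\[
h^0(F_1(n)) = \sum_{i=1}^r \max(0, a_i + n + 1),
\]
which is a nonnegative, nondecreasing, piecewise linear function of $n \in \Z$. The first forward difference is
\[
h^0(F_1(n+1)) - h^0(F_1(n)) = \#\{i : a_i \geq -n-1\},
\]
so the second difference satisfies
\[
h^0(F_1(n+1)) - 2 h^0(F_1(n)) + h^0(F_1(n-1)) = \#\{i : a_i = -n-1\}.
\]
Thus the multiplicities of the values appearing in the multiset $\{a_i\}$ are recoverable from the function $n \mapsto h^0(F_1(n))$ alone.

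Since by hypothesis $H^0(F_1(n)) = H^0(F_2(n))$ for every $n \in \Z$, applying the same formula to $F_2$ shows that $\#\{i : a_i = k\} = \#\{j : b_j = k\}$ for every $k \in \Z$. Hence the multisets $\{a_i\}$ and $\{b_j\}$ coincide, and $F_1 \cong F_2$. There is no real obstacle here: the only subtle point is remembering that one needs cohomology in \emph{all} twists (a single twist is insufficient, since e.g.\ $\cO(-1) \oplus \cO(1)$ and $\cO \oplus \cO$ both have vanishing $H^0$ at $n=0$), but the second-difference formula makes it clear that the full sequence of $h^0$ values determines the splitting type completely. Note that $h^1$ data is redundant for this argument since $h^1(F(n)) = h^0(F(n)) - \chi(F(n))$ and $\chi$ is encoded in the leading behaviour of $h^0$.
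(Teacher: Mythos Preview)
Your proof is correct and follows essentially the same approach as the paper: both arguments invoke the splitting theorem and then recover the multiset of degrees from the function $n \mapsto h^0(F(n))$. The only cosmetic difference is that the paper peels off the top summand inductively (reading $n_r = \max\{m : h^0(F(-m)) \neq 0\}$ and its multiplicity $m_r = h^0(F(-n_r))$, then passing to the quotient), whereas you extract all multiplicities at once via the second difference formula; this is a slightly slicker packaging of the same information.
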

\begin{proof}
Isomorphic bundles have the same cohomology. For the other direction write $F = \oplus_{i = 1}^r \cO(n_i)^{m_i}$ with $n_i< n_{i+1}$. Then we determine $n_r$  as $n_r = \max\{m | h^0(F(-m))\neq 0\}$ and then $m_r = h^0(F(-n_r))$.  Thus we can pass to $F' = F/\cO(n_r)^{m_r}$ and we are done by induction on the rank of $F$.
\end{proof}

\begin{proof}[proof of proposition \ref{p:precursor_to_constant_bundles}]
By corollary \ref{c:non_integrality_condition} $Rq_*E(n,m)$ is a vector bundle and if it didn't have natural cohomology then lemma \ref{l:spectral_seq} would contradict that $E$ has natural cohomology. This proves (a).

The fiber $F_z:= E|_{q^{-1}(z)}$ determines the rank of $Rq_*E$ at $z \in \P^1$. In particular by cohomology and base change \cite[III.12.11]{HartMR0463157} we have an isomorphism
\begin{equation}\label{eq:cohom_base_change}
Rq_*E(m,n)\ox \cO_{\P^1,z}/m_z \cong  H^*(z \x \P^1,F_z(n)).
\end{equation}
As $Rq_*E(m,n)$ is concentrated in one degree we see that $F_z(n)$ has only $H^0$ or only $H^1$ hence (b).

For part (c) we first prove the fibers $F_z$ have constant isomorphism type. Equation \ref{eq:cohom_base_change} together with the fact that $Rq_*E(m,n)$ is locally free shows
\[
\rank Rq_*E(m,n) = \dim H^*(z \x \P^1,F_z(n)).
\]
Thus by lemma \ref{l:sameCohomsameIsotype} all fibers $F_z$ are isomorphic.

Next we observe that the only vector bundles on $\P^1$ that have natural cohomology are of the form $\cO(b)^{r_1}\oplus \cO(b-1)^{r-r_1}$. In fact given $r>0$ and an integer $d$ there is a unique integer $b$ such that $0\leq d-r b<r$ then setting $r_1 = d-r b$ we get that $F = \cO(b)^{r_1}\oplus \cO(b-1)^{r-r_1}$ is the unique vector bundle on $\P^1$ with natural cohomology such that $\chi(F)=r_1(b+1)+(r-r_1)b = r b+r_1= d$.

Finally, by lemma \ref{l:chiEabformula} we have that  
\[
\chi(F) = \chi(E|_{q^{-1}(z)}) = r \cdot \a_E ,\ \ \ r = \rank E.
\]
Thus by the previous paragraph $F = \cO(b)^{r_1}\oplus \cO(b-1)^{r-r_1}$ where $r \a_E = r_1 + r b$.
\end{proof}

\begin{lemma}\label{l:chiEabformula}
If $E$ is a vector bundle of rank $r$ on $Q$ then for $z,w\in \P^1$
\[
\chi(E(a,b)) = r ab + \chi(E|_{p^{-1}(z)})a + \chi(E|_{q^{-1}(w)})b + \chi(E)
\]
\end{lemma}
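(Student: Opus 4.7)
The plan is to compute $\chi(E(a,b))$ as a polynomial in $(a,b)$ by restricting to fibers of $p$ and $q$. Fix a point $z \in \P^1$ and consider the short exact sequence
\[
0 \to \cO_Q(-1,0) \to \cO_Q \to \cO_{p^{-1}(z)} \to 0.
\]
Tensoring with the locally free sheaf $E(a,b)$ preserves exactness, and since $\cO_Q(a,0)$ is pulled back from the first factor, it restricts trivially to the fiber $p^{-1}(z) = \{z\} \x \P^1$, so $E(a,b)|_{p^{-1}(z)} = E|_{p^{-1}(z)}(b)$. Taking Euler characteristics yields
\[
\chi(E(a,b)) - \chi(E(a-1,b)) = \chi(E|_{p^{-1}(z)}(b)).
\]

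Next I would apply Riemann--Roch on $\P^1$ to the rank-$r$ bundle $E|_{p^{-1}(z)}$ to get $\chi(E|_{p^{-1}(z)}(b)) = \chi(E|_{p^{-1}(z)}) + r b$. Telescoping in $a$ (the sheaf-level recursion is valid for every integer, so induction runs both up and down from $a=0$) gives
\[
\chi(E(a,b)) = \chi(E(0,b)) + a\,\chi(E|_{p^{-1}(z)}) + r a b.
\]
The right-hand side is independent of the chosen $z$: because $p$ is flat and $E$ is locally free, the fibers $E|_{p^{-1}(z)}$ form a flat family over $\P^1$, so their Euler characteristics are locally constant on the connected base $\P^1$, hence constant.

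Running the symmetric argument in the $b$-direction for a point $w \in \P^1$ produces
\[
\chi(E(0,b)) = \chi(E) + b\,\chi(E|_{q^{-1}(w)}),
\]
and substituting the two formulas gives the desired identity on integer values of $(a,b)$. Since both sides are polynomials in $a$ and $b$, equality on $\Z \x \Z$ forces equality as polynomials, finishing the proof. The argument is entirely formal and I do not anticipate any real obstacle; the only point worth underscoring is the appeal to flatness ensuring that the coefficients $\chi(E|_{p^{-1}(z)})$ and $\chi(E|_{q^{-1}(w)})$ are well-defined independently of the base points $z,w$.
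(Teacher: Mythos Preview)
Your proof is correct and follows essentially the same route as the paper: both compute $\chi(E(a,b))$ by restricting to fiber divisors and using additivity of the Euler characteristic. The only cosmetic difference is that the paper uses the thick divisor $aD$ and a Koszul-type resolution of a fat point in one shot, while you telescope one fiber at a time and pick up the $rab$ term via Riemann--Roch on $\P^1$; these are interchangeable implementations of the same idea.
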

\begin{proof}
Consider the short exact sequence
\[
0\to\cO(-a,0) \to \cO \to \cO_{a D}\to 0
\]
where $D = p^{-1}(z)$. Then applying $\ox E$ and taking $\chi$ gives $\chi(E(a,0) )= a\cdot \chi(E|_{p^{-1}(pt)})+\chi(E)$. We get a similar expression for $\chi(E(0,b))$. Doing the same with
\[
0\to \cO(-a,-b) \to \cO(-a,0) \oplus \cO(0,-b) \to \cO \to \cO_{a b \cdot pt} \to 0
\]
yields $\chi(E(a,b)) = r a b+\chi(E(a,0))+\chi(E(0,b))-\chi(E)$ which with the previous expressions give the result.
\end{proof}

\section{Constant Bundles}\label{s:constant_bundles}
Let $E$ be a vector bundle on $Q$. Recall $\P^1 \xleftarrow{p} Q \xrightarrow{q} \P^1$ are the two projections. 

We say $E$ is {\it  constant} if the isomorphism type of $E|_{p^{-1}(z)}$ is constant independent of $z$ or if the same condition holds for $E|_{q^{-1}(z)}$. In the first case we say $E$ is constant with respect to $p$ and in the latter case $E$ is constant with respect to $q$. To unify these cases we write $Q = \P^1_b \x \P^1_f$ (base and fiber) and a constant bundle $E$ on $Q$ has the property that $F = E|_{z\x \P^1_f}$ is constant as $z \in \P^1_b$ varies.

Loosely speaking bundles over $Q$ are more complicated than bundles over $\P^1$. Constant bundles, however, are equivalent to principal bundles over $\P^1$ as we shall see shortly.

\begin{lemma}\label{l:constantBundles}
A rank $r$ constant bundle $E$ on $Q$ is determined by a pair $(F,P)$ where $F$ is a rank $r$ vector bundle on $\P^1$ and $P$ is a principal $Aut(F)$-bundle. In particular a morphism $\lambda\colon \Gm \to Aut(F)$ specifies such a bundle $P$.
\end{lemma}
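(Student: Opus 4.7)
The plan is to translate the data of a constant bundle into a morphism to $\bun_G$ and exploit the local structure of $\bun_G$ at the point $[F]$. Write $Q = \P^1_b \times \P^1_f$ as in the definition of constant bundle. First I would invoke the defining universal property of $\bun_G = \bun_G(\P^1_f)$ (equation~\eqref{eq:Puniv}): the bundle $E \to Q$ corresponds to a morphism $\psi \colon \P^1_b \to \bun_G$ with $\psi(z) = [E|_{\{z\}\times\P^1_f}]$. The hypothesis that $E$ is constant says exactly that $\psi$ factors set-theoretically through a single closed point $[F] \in \bun_G$. The residual gerbe at this point is $B\Aut(F) \hookrightarrow \bun_G$ (this is what we get from the local structure of $\bun_G$ at a $\C$-point, where the automorphism group is affine algebraic by Corollary~\ref{c:AutsandExts}(a)), so $\psi$ lifts uniquely to a morphism $\tilde\psi \colon \P^1_b \to B\Aut(F)$.

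Second, a morphism $\P^1_b \to B\Aut(F)$ is the same data as a principal $\Aut(F)$-bundle $P \to \P^1_b$. This already establishes one direction of the correspondence. For the converse, given $(F,P)$ I would construct $E$ explicitly as the associated bundle
\[
E \ :=\ p^*P \,\times^{\Aut(F)}\, q^*F,
\]
where $\Aut(F)$ acts on the pulled-back sheaf $q^*F$ via its tautological action on $F$ (constant in the $\P^1_b$-direction). One then checks that the restriction $E|_{\{z\}\times \P^1_f}$ recovers $F$ for every $z$, so this is indeed a constant bundle, and that the two constructions are inverse to one another, since principal bundles, morphisms to $B\Aut(F)$, and constant bundles all carry equivalent groupoid structures.

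For the last sentence, given a morphism $\lambda \colon \Gm \to \Aut(F)$ I would take any principal $\Gm$-bundle on $\P^1_b$ (for instance the frame bundle of $\cO_{\P^1_b}(1)$) and form the associated $\Aut(F)$-bundle $P = \cO_{\P^1_b}(1)^\times \times^{\Gm,\lambda} \Aut(F)$, which is the desired $P$.

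The main obstacle I expect is purely bookkeeping: verifying that the set-theoretic constancy condition really produces a \emph{scheme-theoretic} factorization $\P^1_b \to B\Aut(F) \to \bun_G$ rather than only a factorization on closed points. This amounts to showing that the preimage of $[F]$ under $\bun_G \to \{\bun_G\}$ is the closed substack $B\Aut(F)$, which is part of the standard local description of $\bun_G$ and can be extracted from Proposition~\ref{p:basicsBunG}(c) together with Corollary~\ref{c:AutsandExts}(a). The rest is essentially the Yoneda-style unpacking of the universal bundle.
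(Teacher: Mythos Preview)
Your argument for the main correspondence is essentially the paper's: view $E$ as a map $\P^1_b\to\bun_G$, observe that constancy forces the image to be a single point, and factor through $pt/\Aut(F)=B\Aut(F)$. The paper is terser and does not spell out the converse or the reducedness issue you flag, but the substance is the same; your explicit associated-bundle formula $E=p^*P\times^{\Aut(F)}q^*F$ is exactly what the paper records in the next lemma.

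Where you diverge from the paper is the final sentence. You read $\lambda\colon\Gm\to\Aut(F)$ as a cocharacter and build $P$ as an associated bundle to $\cO(1)^\times$. The paper does \emph{not} mean a group homomorphism here: the remark immediately following the lemma says ``In general $\lambda$ is not a group homomorphism.'' What is intended is a morphism of \emph{schemes}, i.e.\ an element $\lambda\in\Aut(F)(\C[w^\pm])$, which then serves as a transition function for an $\Aut(F)$-bundle on $\P^1_b$ via Proposition~\ref{p:basicsBunG}(e). This is the $\lambda(w)$ of equation~\eqref{eq:lambda} that drives the rest of the paper. Your cocharacter construction only yields the very special bundles induced from a one-parameter subgroup, which is too restrictive for the applications that follow; you should replace that paragraph with the transition-function interpretation.
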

\begin{rmk}
In general $\lambda$ is not a group homomorphism. 
\end{rmk}

\begin{proof}
We can view $E$ as a family of vector bundles on $\P^1_f$ parametrized by $\P^1_b$. That is, we can say $E$ arises from a map $\phi :\P^1_b \to \bun_G(\C)$ with $G = GL_r$. 

The image of $\phi$ necessarily consists of a single point $F \in \bun_G(C)$; that is $\phi$ must factor through the locally closed substack $pt/Aut(F) \subset \bun_G$. Then a morphism $\P^1_b \to pt/Aut(F)$ is by definition a principal $Aut(F)$ bundle on $\P^1_b$. The last statement follows from proposition \ref{p:basicsBunG}(e).
\end{proof}  

Recall the universal bundle $P^{univ}$  (see \eqref{eq:Puniv}) of $\bun_G \x \P^1_f$. From the proof of the lemma we see that any constant bundle $E$ is of the form $(\phi,id)^*P^{univ}$ for a map $Q \xrightarrow{(\phi,id)} \bun_G \x \P^1_f$.

\begin{lemma}\label{l:EequalsP(F)}
Let $F \in \bun_G(\C)$ and set $H = Aut(F)$. The restriction of $P^{univ}$ to  $pt/H \x \P^1_f$ is $F/H$. Let $E$ be a constant bundle arising from a pair $(F,P_H)$ where $P_H$ is a principal $H$-bundle. Then $E = P_H\x^H F$. 
\end{lemma}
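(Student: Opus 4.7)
The plan is to reduce the assertion to the universal property of $P^{univ}$ recalled in \eqref{eq:Puniv}, by factoring the classifying map of the constant bundle $E$ through the locally closed substack $pt/H \subset \bun_G$ picked out by $F$. The first claim — that $(\iota,id)^*P^{univ} \cong F/H$, where $\iota\colon pt/H \hookrightarrow \bun_G$ is the natural inclusion — I verify via the $T$-point description of $\bun_G$. A map $T \to pt/H$ is a principal $H$-bundle $P \to T$, and its post-composition with $\iota$ classifies the rank-$r$ vector bundle $P \x^H F$ on $T \x \P^1_f$ (remark \ref{rmk:GL_rBundVectorBund}). Applied to the identity map $pt/H \to pt/H$, whose corresponding $H$-torsor is the tautological one, the associated bundle is exactly $F/H$ regarded as a vector bundle on $pt/H \x \P^1_f$; by the defining property \eqref{eq:Puniv} this is $(\iota,id)^* P^{univ}$.

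For the second claim, lemma \ref{l:constantBundles} gives that $E$ is classified by a morphism $\phi\colon \P^1_b \to \bun_G$ that factors as $\P^1_b \xrightarrow{\psi} pt/H \xrightarrow{\iota} \bun_G$, where $\psi$ is precisely the classifying map of the principal $H$-bundle $P_H$. Functoriality of pullback then yields
\[
E \;=\; (\phi,id)^* P^{univ} \;=\; (\psi,id)^* (\iota,id)^* P^{univ} \;=\; (\psi,id)^*(F/H),
\]
and unwinding the last pullback along $\psi$ — i.e.\ replacing the universal $H$-torsor over $pt/H$ by $P_H$ in the construction of the associated bundle — gives precisely $P_H \x^H F$, as desired.

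The only real obstacle is the bookkeeping between $T$-points and genuine identifications of stacks: one must check that $(\iota,id)^*P^{univ}\cong F/H$ is a natural equivalence on the nose, not merely a bijection of isomorphism classes over each $T$. This is a standard $2$-Yoneda argument given the descriptions of $\bun_G$ and $pt/H$ in section \ref{s:prelims}, so I do not expect any surprises; the calculation is essentially a repackaging of the definition of an associated fiber bundle.
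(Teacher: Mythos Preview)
Your proposal is correct and follows essentially the same route as the paper: first identify the restriction of $P^{univ}$ to $pt/H\times\P^1_f$ with $F/H$, then pull back along the classifying map $\psi\colon\P^1_b\to pt/H$ of $P_H$. The paper phrases the first step via the equivalence between vector bundles on $pt/H\times\P^1_f$ and $H$-equivariant bundles on $\P^1_f$, and handles the second step with a cartesian diagram over the atlas $P_H\to\P^1_b$, whereas you invoke the $T$-point description and functoriality of pullback; these are the same argument in different dress.
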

\begin{proof}
A vector bundle on $pt/H\x \P^1_f$ is equivalent to an $H$-equivariant vector bundle thus $P^{univ}|_{ pt/H \x \P^1_f}$ is  $F$ endowed with an $H$-equivariant structure given by the natural action of $H$ on $F$. Finally from the diagram 
\[
\xymatrix{ P_H \x \P^1_f  \ar[r]^{(\tilde \phi,id)}\ar[d] & pt \x \P^1_f \ar[d] & F \ar[l] \\
\P^1_b \x \P^1_f \ar[r]^{(\phi,id)} & pt/H \x \P^1_f  & \ar[l] F/H
}
\]
it follows that $(id,\phi)^*F/H = \frac{(id,\tilde \phi)^*F}{H} =( P_H \x F)/H=  P_H \x^H F$.
\end{proof}

Let us use coordinates $(z,w)$ on $Q$ to make this construction more explicit. Assume $E = (\phi, id)^*P^{univ} = P \x^H F$. Let $\gamma \in G[z^\pm]$ be a transition function for $F$. Recall $H = Aut(F) \subset G[z^{-1}] \x G[z]$. The $H$-bundle $P$ is described by a transition function $(\a_-,\a_+)\in H[w^\pm]$. Finally, if $G$ has rank $r$ then $E$ is glued together as follows:
\begin{equation}\label{eq:E transition functions}
\xymatrix{
\C[z^{-1},w]^{\oplus r} \ar[r]^{\a_-} & \C[z^{-1},w^{-1}]^{\oplus r}\\
\C[z,w]^{\oplus r} \ar[u]^\gamma\ar[r]^{\a_+} & \C[z,w^{-1}]^{\oplus r} \ar[u]_\gamma
}
\end{equation} 
Note that $\a_-,\a_+ \in G[z^\pm,w^\pm]$ while $\gamma$ only depends on $z$.

\subsection{Pushforwards}
Here we construct constant bundles with respect to $q \colon Q \to \P^1$ and compute their Euler characteristic. In this convention $\P^1_b = q(Q)$ and $\P^1_f = q^{-1}(z) = \P^1 \x pt$.

We begin with this lemma
\begin{lemma}\label{l:basic q_*E}
Let $F = \cO_{\P^1_f}^{r_1} \oplus \cO_{\P^1_f}^{r_2}(-1)$ and set $s_1 = h^0(F(n))$ and $s_2 = h^1(F(n))$. Let $P$ be an $Aut(F)$-bundle and set $E = P \x^{Aut(F)} F$ and $V_\pm = \ec \C[w^\pm]$. Then
\begin{itemize}
\item[(a)] $H^0(E(n,0)|_{\P^1 \x V_\pm}) \cong \C[w^\pm]^{\oplus s_1}$ and $H^1(E(n,0)|_{\P^1 \x V_\pm}) \cong \C[w^\pm]^{\oplus s_2}$
\item[(b)] The automorphism $(\a_-,\a_+)$ from equation \ref{eq:E transition functions} acts on $H^i(E(n,0)|_{\P^1 \x V_\pm})$ via corollary \ref{c:AutsandExts} giving elements $\lambda(n) \in GL_{s_1}[w^\pm]$ and $\lambda^1(n) \in GL_{s_2}[w^\pm]$.
\item[(c)] $q_*E(n,0) = F_{\lambda(n)}$ for $n\geq 0$ and $R^1q_*E(n,0) = F_{\lambda^1(n)}$ for $n\leq -1$ where $F$ is the map in proposition \ref{p:basicsBunG}(a).  
\end{itemize}
\end{lemma}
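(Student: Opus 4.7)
The plan is to exploit the explicit gluing (\ref{eq:E transition functions}) to compute everything locally on $V_\pm$ and then recognize the resulting bundle on $\P^1_b$ via Proposition \ref{p:basicsBunG}(a).

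For part (a), on the open set $\P^1_f \times V_\pm$ either of the two trivializations of the $Aut(F)$-bundle $P$ coming from $V_+ = \ec\C[w]$ or $V_- = \ec\C[w^{-1}]$ identifies $E(n,0)|_{\P^1_f \times V_\pm}$ canonically with the pullback of $F(n)$ from $\P^1_f$. Since $V_\pm$ is affine and the fiber projection is flat, the K\"unneth/flat base change formula gives
$$H^i(\P^1_f \times V_\pm, E(n,0)|_{\P^1_f \times V_\pm}) \cong H^i(\P^1_f, F(n)) \otimes_\C \C[w^\pm] = \C[w^\pm]^{\oplus s_i}.$$

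For part (b), on the intersection $V_\pm$ the two trivializations differ by $(\alpha_-,\alpha_+) \in Aut(F)(\C[w^\pm]) = Aut(F)[w^\pm]$, precisely the element in the horizontal arrows of (\ref{eq:E transition functions}). The induced change of basis on cohomology is obtained by applying the polynomial representation $Aut(F) \to GL(H^i(F(n)))$ supplied by Corollary \ref{c:AutsandExts}(e) to this $\C[w^\pm]$-valued point; this gives an element of $GL_{s_i}(\C[w^\pm]) = GL_{s_i}[w^\pm]$, which we name $\lambda(n)$ for $i=0$ and $\lambda^1(n)$ for $i=1$.

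For part (c), compute $Rq_*E(n,0)$ with respect to the affine cover $\P^1_b = V_+ \cup V_-$. Part (a) provides free sections of rank $s_i$ over each $V_\pm$ (where now $V_\pm$ denotes $V_+$ or $V_-$ separately), and part (b) identifies the gluing over the intersection with multiplication by $\lambda(n)$ or $\lambda^1(n)$. When $n \geq 0$ we have $H^1(F(n)) = 0$, so cohomology and base change forces $R^1q_*E(n,0) = 0$ and presents $q_*E(n,0)$ as the rank-$s_1$ bundle obtained by gluing $\cO_{V_+}^{s_1}$ to $\cO_{V_-}^{s_1}$ via $\lambda(n)$; by Proposition \ref{p:basicsBunG}(a) together with Remark \ref{rmk:O(n)convention} this is $F_{\lambda(n)}$. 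Symmetrically, for $n \leq -1$ we have $H^0(F(n)) = 0$, so $q_*E(n,0) = 0$ and the same Čech description identifies $R^1q_*E(n,0)$ with $F_{\lambda^1(n)}$.

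The main bookkeeping subtlety I expect is in part (b): checking that the direction of $(\alpha_-,\alpha_+)$ in (\ref{eq:E transition functions}) together with the sign convention of Remark \ref{rmk:O(n)convention} yields the transition function $\lambda(n)$ rather than its inverse, and that the block-upper-triangular shape of Corollary \ref{c:AutsandExts}(e) lands the cocycle in the correct $GL_{s_i}$. Everything else is a routine appeal to Corollary \ref{c:AutsandExts}(e) and standard base change.
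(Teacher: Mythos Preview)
Your proposal is correct and follows essentially the same route as the paper's proof: trivialize $E$ over $V_\pm$ to identify it with $F(n)\boxtimes\cO_{V_\pm}$, read off cohomology by K\"unneth, obtain the gluing cocycle from $(\alpha_-,\alpha_+)$ via Corollary~\ref{c:AutsandExts}(e), and conclude (c) from the vanishing of $H^1(F(n))$ for $n\geq 0$ (resp.\ $H^0(F(n))$ for $n\leq -1$). The only cosmetic difference is that the paper justifies the trivialization on $V_\pm$ by noting that every $Aut(F)$-bundle on $\ec\C[w^\pm]$ is trivial, whereas you (more simply) just restrict the given trivialization from $V_+$ or $V_-$; your version is in fact cleaner and avoids an unnecessary appeal.
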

\begin{proof}
The restriction $E|_{\P^1 \x V_\pm}$ corresponds to a map $V_\pm \to pt/Aut(F)$ which must be trivial since every $Aut(F)$ bundle on $V_\pm$ is trivial. Thus $E(n,0)|_{\P^1 \x V_\pm} \cong F(n)\ox \cO_{V_\pm}$. Hence $H^i(E(n,0)|_{V_\pm}) \cong H^i(F(n)) \ox \C[w^\pm]$ hence (a).

Part (b) follows readily from an examination of equation \ref{eq:E transition functions}. 

Finally (c) follows from the fact that $H^1(F(n)) = 0$ when $n\geq 0$ and $H^0(F(n)) = 0$ when $n\leq -1$. 
\end{proof}
Examples of the matrices $\lambda(n),\lambda^1(n)$ are computed in equation \eqref{eq:pushforwardTransitionFuncs}.

Recall the group $Aut(F)$ (corollary \ref{c:AutsandExts}(d)) factorizes as $Aut(F) = L_F \ltimes U_F$ with $L_F = GL_{r_1} \x GL_{r_2}$ and $U_F \cong Id +\hom(\cO(-1)^{r_2},\cO^{r_1})$ is the unipotent radical of $Aut(F)$.

In particular an arbitrary element of $Aut(F)[w^\pm]$ that has the following form:
\begin{equation}\label{eq:lambda}
\lambda(w) = \abcd{\lambda_{r_1}(w)}{\eta(z,w)}{0}{\lambda_{r_2}(w)}, \ \ \ \lambda_{r_i}(w) \in GL_{r_i}(\C[w^\pm]),\ \ \ \ \eta(z,w) \in U_F(\C[w^\pm]).
\end{equation}

Similar to example \ref{ex:Ext(0,-n)} we can use appropriate row and column operations so that 
\begin{equation}\label{eq:finitDimExtSpace}
\eta(z,w) \in Ext^1_b(F_2,F_1) \ox H^0(\cO_{\P^1_f}(1))
\end{equation}
where the subscript $b$ indicates this is $Ext^1$ of two vector bundles $F_1 = F_{\lambda_{r_1}(w)}$ and $F_2 = F_{\lambda_{r_2}(w)}$ on $\P^1_b$; recall (proposition \ref{p:basicsBunG}) that $F_\gamma$ is the vector bundle with transition function $\lambda$. 

The following proposition describes the pushforward and Hilbert polynomial of constant bundles $E$ which are determined by the pair $(F,\lambda(w))$ with $\lambda$ as in \eqref{eq:lambda}. For the statement recall the top Harder-Narasimhan stratum $HN^{top}$ discussed before example \ref{ex:Aut(O(1)+O(-1))}. We also use the following notation: if $g\in Aut(F)$ then let $\Gamma(g)$ denote the action of $g$ on $H^*(F(n))$.

\begin{prop}\label{p:constantBundlesChi}
Set $F=\cO^{r_1} \oplus \cO(-1)^{r_2}$ and $\lambda(w)$ be as in \eqref{eq:lambda}. Let $E = E(F,\lambda(w))$ be the constant vector bundle on $Q$ determined by $F$  and $\lambda(w)$ via lemma \ref{l:constantBundles}.
\begin{itemize}
\item[(a)] $R^1q_*E(n,0) = 0$ if $n\geq 0$ and $q_*E(n,0)=0$ if $n<0$.
\item[(b)] $F_1=q_*E$ and $F_2 = R^1q_*E(-1,0)$ are vector bundles given by $F_1 = F_{\lambda_{r_1}(w)}$ and $F_2 =F_{ \lambda_{r_2}(w)}$.
\item[(c)] For $n\in \Z$ we have $Rq_*E(n,0) \in Ext^1(F_2^{| n|},F_1^{|n+1|})$.
\item[(d)] $\chi(E(x,y))  = r x y + \chi(F_1\oplus F_2)x + \chi(F) y +\chi(F_1)$ with $r = r_1 +r_2$. Equivalently,
\[
\frac{\chi(E(x,y))}{r} =  \left(x+\frac{\chi(F)}{r}\right)\left(y+\frac{\chi(F_1\oplus F_2)}{r}\right)-\frac{\deg (F_1^\vee \ox F_2)}{r^2}.
\]
Alternatively, if $d_1 = \deg(F_1)$ and $d_2 = \deg(F_2)$ then 
\[
\frac{\chi(E(x,y))}{r} =  \left(x+\frac{r_1}{r}\right)\left(y+1+\frac{d_1+d_2}{r}\right)-\frac{d_2 r_1 - d_1 r_2}{r^2}.
\]
\item[(e)] If $F_i$ have natural cohomology and $\deg (F_1^\vee \ox F_2)>0$ then $HN^{top}(Ext^1(F_2^{| n|},F_1^{|n+1|}))$ consists of bundles with natural cohomology.
\item[(f)] The bundle $E$ can be recovered from the splitting type of $F,F_1,F_2$ and the extension data $\eta(z,w) \in Ext^1_b(F_2,F_1) \ox H^0(\cO_{\P^1_f}(1))$.
\end{itemize}
\end{prop}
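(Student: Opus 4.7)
The plan is to treat parts (a)--(c) as the main technical content, with the remaining parts being computations or direct applications of earlier results.

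For (a) and (b), the approach is to apply lemma \ref{l:basic q_*E} together with the splitting $F = \cO^{r_1} \oplus \cO(-1)^{r_2}$. The vanishing in (a) reduces to the fiberwise statements $H^0(F(n)) = 0$ for $n \leq -1$ and $H^1(F(n)) = 0$ for $n \geq 0$, which are immediate. For (b), I specialise to $n = 0$ and $n = -1$ respectively, and observe that the off-diagonal block $\eta$ in \eqref{eq:lambda}, which corresponds to $\hom(\cO(-1)^{r_2}, \cO^{r_1})$, induces the zero map on $H^0(\cO)^{r_1}$ (since $H^0(\cO(-1)) = 0$) and the zero map on $H^1(\cO(-2))^{r_2}$ (since $H^1(\cO(-1)) = 0$). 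Hence the transition functions for $q_*E$ and $R^1 q_*E(-1,0)$ simplify to the Levi factors $\lambda_{r_1}$ and $\lambda_{r_2}$, and proposition \ref{p:basicsBunG}(a) identifies them with $F_{\lambda_{r_1}}$ and $F_{\lambda_{r_2}}$.

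Part (c) is the heart of the proof and the main obstacle. By corollary \ref{c:AutsandExts}(e), the induced action $\Gamma(\lambda(w))$ on $H^i(F(n))$ is block upper triangular, with diagonal blocks $\lambda_{r_1}(w)$ acting on $H^i(\cO(n))^{r_1}$ and $\lambda_{r_2}(w)$ on $H^i(\cO(n-1))^{r_2}$, and an off-diagonal contribution coming from $\eta$. Since $h^0(\cO(n)) = n+1$ for $n \geq 0$ and $h^1(\cO(n)) = |n+1|$ for $n \leq -2$ (and similarly for $\cO(n-1)$), the two diagonal blocks have sizes $r_1 |n+1|$ and $r_2 |n|$. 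Lifting this block structure to the $w$-transition function on $\P^1_b$, and invoking proposition \ref{p:basicsBunG}(a) on each diagonal block, exhibits $Rq_*E(n,0)$ as an extension of $F_2^{|n|}$ by $F_1^{|n+1|}$. The subtle point will be to verify that the off-diagonal block really corresponds to a class in $Ext^1(F_2^{|n|}, F_1^{|n+1|})$ rather than being removable by a change of trivialization, so that one lands in a genuine extension rather than a direct sum.

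For (d), one has $\chi(E(x,y)) = \chi(Rq_*E(x,0)(y)) = (x+1)\chi(F_1(y)) + x\,\chi(F_2(y))$ uniformly in $x \in \Z$, because the sign flip from using $R^1$ for $x < 0$ cancels with the sign flip in $|x|$ and $|x+1|$; expanding via Riemann-Roch on $\P^1$ and matching with lemma \ref{l:chiEabformula} yields the first form, and the alternative forms follow by algebraic manipulation using $\chi(F)=r_1$, $\chi(F_1) = r_1 + d_1$, and $\deg(F_1^\vee \ox F_2) = r_1 d_2 - r_2 d_1$. For (e), since each $F_i$ has natural cohomology it is a direct sum of two consecutive line bundles, and the same holds for $F_1^{|n+1|}$ and $F_2^{|n|}$; the hypothesis $\deg(F_1^\vee \ox F_2) > 0$ combined with remark \ref{rmk:degFF} makes proposition \ref{p:generalExtConnectingMap} directly applicable. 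Finally, for (f), the splitting types of $F$, $F_1$, and $F_2$ recover $\gamma$ and the Levi factors $\lambda_{r_i}$, while $\eta(z,w) \in Ext^1_b(F_2, F_1) \ox H^0(\cO_{\P^1_f}(1))$ recovers the unipotent part of $\lambda(w)$ as in \eqref{eq:finitDimExtSpace}, and lemma \ref{l:EequalsP(F)} assembles these into $E = P \x^{Aut(F)} F$.
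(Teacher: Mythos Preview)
Your proposal is correct and tracks the paper's argument closely for parts (a), (b), (e), and (f). Two small remarks on where you diverge.

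For (c), your concern about the off-diagonal block being ``removable'' is misplaced: the statement only asserts that $Rq_*E(n,0)$ lies in $Ext^1(F_2^{|n|},F_1^{|n+1|})$, which of course includes the split extension. There is nothing to verify here beyond identifying the sub and quotient. The paper phrases this geometrically---the summand $\cO^{r_1}(n)\subset F(n)$ pushes forward to the sub-bundle $F_1^{|n+1|}$, and the quotient kills $\eta$ so equals $F_2^{|n|}$---whereas you read it off the block-upper-triangular shape of the transition function; these are the same argument.

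For (d), you take a slightly more direct route than the paper: you compute $\chi(E(x,y))$ straight from (c) as $(x+1)\chi(F_1(y))+x\,\chi(F_2(y))$ uniformly in $x$, and expand. The paper instead invokes lemma~\ref{l:chiEabformula}, identifies $\chi(E)=\chi(F_1)$ and $\chi(E|_{q^{-1}(z)})=\chi(F)$ directly, and extracts $\chi(E|_{p^{-1}(z)})$ from the identity $\chi(E(1,0))=2\chi(F_1)+\chi(F_2)$. Your computation is cleaner and avoids the auxiliary lemma; the paper's route makes the constants $\alpha_E,\beta_E$ more visibly tied to the fiber restrictions.
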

\begin{proof}
Part (a) follows from the fact that $H^1(F(n)) = 0$ for $n\geq 0$ and $H^0(F(n)) = 0$ for $n<0$.

For part (b) observe that the isomorphism type of $q_*E$ is given by the action of $\Gamma(\lambda(w))$ on $H^0(\cO^{r_1}) = H^0(F)$. Similarly the isomorphism type of $R^1q_*E$ is given by the action of $\Gamma(\lambda(w))$ on $H^1(\cO^{r_2}(-2)) = H^1(F(-1))$.

If $n \neq 0,-1$ then both summands of $F$ contribute to $H^*(F(n))$. In particular the summand $\cO^{r_1}(n) \subset F(n)$ pushes forward to give the subbundle $Rq_*\cO^{r_1}(n) = F_1^{|n+1|} \subset Rq_*E(n,0)$. Then the quotient $Rq_*E(n,0)/F_1^{|n+1|}$ kills the action of $\eta$ so $Rq_*E(n,0)/F_1^{|n+1|} = Rq_*\cO^{r_2}(n-1) = F_2^{|n|}$. This proves (c).

By (b), $q_*E = F_1$ hence $\chi(E) = \chi(F_1)$. As $E$ is constant along $q$ we have $\chi(E|_{q^{-1}(z)}) = \chi(F)$.  It remains to determine $\chi(E|_{p^{-1}(z)})$. But we have 
\[
\chi(E|_{p^{-1}(z)}) + \chi(F_1)=\chi(E(1,0))=2\chi(F_1) + \chi(F_2)
\]
where the first equality is from lemma \ref{l:chiEabformula} and the second from part (c). Therefore $\chi(E|_{p^{-1}(z)}) = \chi(F_1) + \chi(F_2)$ which proves the first part of (d). The second part follows from a direct computation. 

Writing $F_1 = \cO(-a)^{s_1} \oplus \cO(-a+1)^{s_2}$ and $F_2 = \cO(-b)^{r_1} \oplus \cO(-b+1)^{r_2}$ then the condition $\deg(F_1^\vee \ox F_2)>0$ translate to $b-a\geq 0$ by remark \ref{rmk:degFF} so (e) follows from proposition \ref{p:generalExtConnectingMap}(a).
 
Finally, (f) follows from corollary \ref{l:constantBundles} because $F_1,F_2,\eta$ determine the blocks of $\lambda(w)$ in equation \ref{eq:lambda}.
\end{proof}

In light of proposition \ref{p:constantBundlesChi}(f) we denote constant bundles of this form as $E = E(F,F_1,F_2,\eta)$. We call $Ext^1_b(F_2,F_1) \ox H^0(\cO_{\P^1_f}(1))$ the {\it space of extension data} for $E$.

\begin{rmk}\label{rmk:extDataModulispace}
The space $Ext^1_b(F_2,F_1) \ox H^0(\cO_{\P^1_f}(1))$ can be viewed as a moduli space of constant bundles by fixing $F,F_1,F_2$ and associating to $\eta \in Ext^1_b(F_2,F_1) \ox H^0(\cO_{\P^1_f}(1))$ the constant bundle $E = E(F,F_1,F_2,\eta)$. The reason to isolate $\eta$ is that the cohomology of $E$ depends continuously on the class $\eta$ determines in $Ext^1(F_2^{|n|},F_1^{|n+1|})$ whereas the choice $F_1,F_2$ only contribute the discrete choices of $r_i = \rank(F_i)$ and $d_i = \deg(F_i)$. 
\end{rmk}

\begin{cor}\label{c:prescribedChi}
Suppose $\a,\beta,\gamma \in \Q$ and $\a \in (0,1)$ and $p(x,y) = r (x+\a)(y+\beta)-r \gamma \in \Z[x,y]$. Then there is a constant bundle $E = E(F,F_1,F_2,\eta)$ such that $\chi(E(x,y)) = p(x,y)$. Moreover $F_1,F_2$ can be taken to have natural cohomology.
\end{cor}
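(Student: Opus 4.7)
The plan is to match the parameters in Proposition~\ref{p:constantBundlesChi}(d) to the data $\alpha,\beta,\gamma$ and then invoke that construction directly. First, I would expand
\[
p(x,y) = r x y + r\beta\, x + r\alpha\, y + (r\alpha\beta - r\gamma)
\]
and read off from $p\in\Z[x,y]$ that $r\alpha$, $r\beta$, and $r\alpha\beta - r\gamma$ all lie in $\Z$. Since $\alpha\in(0,1)$, the integer $r_1 := r\alpha$ sits in $\{1,\dots,r-1\}$; I then set $r_2 := r - r_1$ and take $F := \cO^{r_1}\oplus\cO(-1)^{r_2}$. This already arranges the first factor $x + r_1/r$ in Proposition~\ref{p:constantBundlesChi}(d) to equal $x+\alpha$.

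Next I would recover the degrees $d_i := \deg F_i$ by matching the remaining two quantities in the formula, which produces the linear system
\[
d_1 + d_2 = r(\beta-1), \qquad -r_2\, d_1 + r_1\, d_2 = r^2\gamma.
\]
Its determinant is $r_1+r_2=r\neq 0$, so Cramer's rule gives
\[
d_1 = r_1(\beta-1) - r\gamma = (r\alpha\beta - r\gamma) - r_1, \qquad d_2 = r_2(\beta-1) + r\gamma = r\beta - r_2 - (r\alpha\beta - r\gamma),
\]
and both right-hand sides are integers by the three integrality constraints identified above. This is the one arithmetic check of the argument and is the closest thing to an obstacle.

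Then I would exhibit $F_1, F_2$ with natural cohomology realizing these prescribed ranks and degrees. As in the classification used in the proof of Proposition~\ref{p:precursor_to_constant_bundles}(c), for each $(r_i, d_i)$ with $r_i>0$ there is a unique bundle of the form $\cO(b_i)^{s_i}\oplus\cO(b_i-1)^{r_i-s_i}$ on $\P^1$ with that rank and degree (choose $b_i$ and $s_i$ so that $r_i b_i + s_i = d_i$ with $0\le s_i < r_i$), and every such bundle has natural cohomology.

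Finally, I would pick any $\eta$ in the extension data space $Ext^1_b(F_2,F_1)\ox H^0(\cO_{\P^1_f}(1))$ (the trivial choice $\eta=0$ suffices) and form the constant bundle $E := E(F, F_1, F_2, \eta)$ via Proposition~\ref{p:constantBundlesChi}(f). By Proposition~\ref{p:constantBundlesChi}(d) this $E$ has $\chi(E(x,y)) = p(x,y)$, which is all the corollary asks. No genericity of $\eta$ is required here since only the Euler characteristic is prescribed---stronger conclusions such as natural cohomology of $E$ or of its pushforward are not part of this statement.
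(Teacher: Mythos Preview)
Your argument is correct and follows the paper's proof essentially verbatim: set $r_1=r\alpha$, solve the same $2\times 2$ linear system for $(d_1,d_2)$, verify integrality via $r\alpha,\,r\beta,\,r(\alpha\beta-\gamma)\in\Z$, and then appeal to the existence of a natural-cohomology bundle on $\P^1$ with prescribed rank and degree (the paper packages this as Lemma~\ref{l:natcohomP1}). One tiny slip: the degree of $\cO(b)^{s}\oplus\cO(b-1)^{r-s}$ is $r(b-1)+s$, not $rb+s$, so your parametrization should read $r_i(b_i-1)+s_i=d_i$; this does not affect the argument.
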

\begin{proof}
Write $\a = \frac{r_1}{r}$ and set $F = \cO^{r_1}\oplus \cO(-1)^{r_2}$. We wish to find vector bundles $F_1,F_2$ such that for $E = E(F,F_1,F_2,\eta)$ we have $\chi(E(x,y)) = p(x,y)$. But by proposition \ref{p:constantBundlesChi}(d), $\chi(E(x,y))$ depends only $\rank F_i$ and $\deg F_i$. We show there exists $d_1,d_2$ such that for any vector bundles $F_i$ of rank $r_i$ and degree $d_i$ we have $E = E(F,F_1,F_2,\eta)$ has the desired Hilbert polynomial.

By proposition \ref{p:constantBundlesChi}(d) we need to find $r_1,r_2,d_1,d_2$ such that 
\[
(x+\a)(y+\beta)- \gamma = \left(x+\frac{r_1}{r}\right)\left(y+1+\frac{d_1+d_2}{r}\right)-\frac{d_2 r_1 - d_1 r_2}{r^2}.
\]
We must take $r_1 = r \a$ and $r_2 = r - r_1$ and it remains to find $d_1,d_2$ such that
\[
\frac{1}{r^2}\abcd{r}{r}{-r_2}{r_1}\ab{d_1}{d_2}+\ab{1}{0} = \ab{\beta}{\gamma}
\]
which has the unique solution
\[
\ab{d_1}{d_2}  =\ab{ r_1\beta -r\gamma}{r_2\beta+r \gamma}- \ab{r_1}{r_2} = \ab{ r(\a \beta -\gamma)}{r\beta -r (\a\beta - \gamma)}- \ab{r \a}{r - r \a}
\]
As $p(x,y) \in \Z[x,y]$ we have $r \a, r \beta, r(\a \beta -\gamma) \in \Z$ thus $d_1,d_2 \in \Z$. The final statement follows from lemma \ref{l:natcohomP1}
\end{proof}
\begin{rmk}
The hypotheses $\a \in (0,1)$ and $r \a \in \Z$ together require $r>1$. In particular corollary \ref{c:prescribedChi} can fail if $r = 1$: there is no line bundle with $L$ with $\chi(L(x,y)) = x y - 1$.
\end{rmk}

\begin{lemma}\label{l:natcohomP1}
If $r>0$ and $d\in \Z$ then there is a rank $r$ bundle $F$ on $\P^1$ such that $\chi(F) = d$ and $E$ has natural cohomology.
\end{lemma}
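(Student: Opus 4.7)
The plan is to invoke the splitting theorem on $\P^1$ and construct $F$ explicitly as a sum of at most two consecutive line bundles, then verify the two required conditions directly.

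First I would apply the division algorithm: given $r > 0$ and $d \in \Z$, write $d = rb + r_1$ with $b \in \Z$ and $0 \leq r_1 < r$. With these integers in hand, define
\[
F := \cO(b)^{r_1} \oplus \cO(b-1)^{r-r_1}.
\]
Then $F$ has rank $r_1 + (r - r_1) = r$ and Euler characteristic
\[
\chi(F) = r_1(b+1) + (r-r_1)b = rb + r_1 = d,
\]
so the numerical conditions are satisfied.

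Next I would check that $F$ has natural cohomology. For any twist we have $F(n) = \cO(b+n)^{r_1} \oplus \cO(b+n-1)^{r-r_1}$, which is a direct sum of line bundles whose degrees differ by $1$. If $b+n \geq 0$ then both summands have vanishing $H^1$, so $H^*(F(n))$ is concentrated in degree $0$ (this includes the boundary case $b+n = 0$, where the second summand contributes nothing). If $b+n \leq -1$ then $b+n-1 \leq -2$ and both summands have vanishing $H^0$, so $H^*(F(n))$ is concentrated in degree $1$ (including the boundary case $b+n=-1$, where the first summand contributes nothing). In either case $F(n)$ has cohomology in a single degree, establishing natural cohomology.

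Since both steps are essentially bookkeeping, there is no substantive obstacle; the lemma is really just a rephrasing of the observation already made in the proof of Proposition~\ref{p:precursor_to_constant_bundles}(c) that the vector bundles on $\P^1$ with natural cohomology are exactly those of the form $\cO(b)^{r_1} \oplus \cO(b-1)^{r-r_1}$, run in the opposite direction to realize a prescribed rank and Euler characteristic.
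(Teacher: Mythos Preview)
Your proof is correct and is essentially the same as the paper's. The paper is slightly terser: it notes that $\chi(F(1)) = \chi(F) + r$ to reduce to the range $0 \le d < r$ and then sets $F = \cO^{d} \oplus \cO(-1)^{r-d}$, which after undoing the twist is exactly your $\cO(b)^{r_1} \oplus \cO(b-1)^{r-r_1}$.
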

\begin{proof}
As $\chi(F(1)) = \chi(F)+r$, we can assume $0\leq d < r$. Then take $F = \cO^d \oplus \cO(-1)^{r-d}$.
\end{proof}

\begin{example}\label{ex:bundleFromPoly}
Let $p = (x+\frac{1}{3})y - 2$. We construct a rank $3$ constant bundle $E$ with $\chi(E(x,y)) = 3 p(x,y)$. Set $F = \cO\oplus \cO(-1)^2$. Let $F_1 = \cO(-7)$ and $F_2 = \cO(2)^2$; in particular $\chi(F_1\oplus F_2) = 0$. Then for any choice of extension data $\eta$ there is, by proposition \ref{p:constantBundlesChi}, a constant bundle with $E = E(F,F_1,F_2,\eta)$ with the desired Hilbert polynomial. 

From $F_1,F_2,\eta$ we recover an element $\lambda(w) \in Aut(F)[w^\pm]$. We compute the automorphism $\Gamma(\lambda(w))$ of $H^*(E|_{\P^1 \x \ec \C[w^\pm]})$ induced by $\lambda(w)$. This allows us to compute $Rq_*E(n,0)$. Let $V(w,-1,6) = \{\sum_{i = -1}^6 c_i w^i\}$ be the vector space of Laurent polynomials supported between $w^{-1}$ and $w^6$.  Then by corollary \ref{c:AutsandExts}(e)
\begin{equation}\label{eq:GammaLambda}
\Gamma(\lambda(w)) = 
\left(\begin{array}{ccc}
w^7& a_0z_0 + a_1z_1 & b_0 z_0 + b_1 z_1\\
& w^{-2} &\\
& & w^{-2}
\end{array}
\right),\ \ a_i,b_i \in V(w,-1,6)
\end{equation}
In fact, there is an isomorphism $Ext^1(\cO(2),\cO(-7)) \cong V(w,-1,6)$ so $V(w,-1,6)^2 \cong Ext^1_b(F_2,F_1)$ and the top right block of equation \ref{eq:GammaLambda} is identified with $Ext_b(F^2,F_1)\ox H^0(\P^1_f,\cO(1))$.

Let $U = \ec \C[w^\pm]$ so that $\P^1 \x \ec \C[w^\pm] = q^{-1}(U)$.  We have $E(n,0)|_{q^{-1}(U)} \cong p^*F(n)|_{q^{-1}(U)} = \cO(n,0) \oplus \cO(n-1,0)|_{q^{-1}(U)}$. The transtion function for $Rq_*E(n,0)$ is determined by the action of $\Gamma(\lambda(w))$ on $H^*(E(n,0)|_{q^{-1}(U)})$. We have
\[
H^0(E(-1,0)|_{q^{-1}(U)}) \cong \left(\begin{array}{c}
0 \\
\frac{\C[w^\pm]}{z_0z_1}\\
\frac{\C[w^\pm]}{z_0z_1}
\end{array}\right),\ \ 
H^0(E(0,0)|_{q^{-1}(U)}) \cong \left(\begin{array}{c}
\C[w^\pm]\\
0\\
0
\end{array}\right)
\]
where $\frac{1}{z_0z_1} \in H^1(\cO(-2))$ is the standard generator. For a third case we have
\[
H^0(E(1,0)|_{q^{-1}(U)}) \cong \left(\begin{array}{c}
z_0\C[w^\pm]\oplus z_1\C[w^\pm]\\
\C[w^\pm]\\
\C[w^\pm]
\end{array}\right)
\]
From equation \ref{eq:GammaLambda} we see that the following matrices
\begin{equation}\label{eq:pushforwardTransitionFuncs}
\left(\begin{array}{cc}
w^{-2} & \\
 & w^{-2}
\end{array}\right), \ \ \left(\begin{array}{c}
w^{7} 
\end{array}\right), \ \ \left(\begin{array}{cccc}
w^7 &  & a_0 & b_0\\
 & w^7 & a_1 & b_1\\
 &  & w^{-2} & \\
 &  &  & w^{-2}
\end{array}\right), \ a_i,b_i \in V(w,-1,6)
\end{equation}
give transition functions for the $R^1q_*E(-1,0), q_*E$ and $q_*E(1,0)$ respectively. So we verify proposition \ref{p:constantBundlesChi}(b),(c): $R^1q_*E(-1,0) = F_2$, $q_*E = F_1$ and $q_*E(1,0) \in Ext^1(F_2,F_1^2)$. 
\end{example}

\subsection{Extension Data}\label{ss:extension data}
At this point we have established that there exist constant bundles $E = E(F,F_1,F_2,\eta)$ with prescribed Hilbert polynomial and if moreover $F_i$ have natural cohomology then $Rq_*E(n,0)$ is always an extension of bundles with natural cohomology: $Rq_*E(n,0) \in Ext^1(F_2^{|n|},F_1^{|n+1|})$.

Recall from remark \ref{rmk:extDataModulispace} that $\eta$ varies in the space $Ext^1_b(F_2,F_1) \ox H^0(\cO_{\P^1_f}(1))$. The final is step is to show there is a choice of extension data $\eta$ such that $Rq_*E(n,0)$ has natural cohomology. We will show a generic choice of $\eta$ leads to a class in $Ext^1(F_2^{|n|},F_1^{|n+1|})$ which lies in the top stratum of the Harder-Narasimhan stratification:
\begin{equation}\label{eq:lastStep}
\mbox{generic }\eta \ \Rightarrow   Rq_*E(n,0)\in HN^{top}(Ext^1(F_2^{|n|},F_1^{|n+1|}))
\end{equation}

Then $Rq_*E(n,0)$ has natural cohomology by proposition \ref{p:generalExtConnectingMap}. We establish \eqref{eq:lastStep} using proposition \ref{p:generalExtConnectingMap}(b). That is we have for every twist $(n,m)$ a short exact sequence 
\begin{equation}\label{eq:sesPushforward}
0\to F_1^{|n+1|}(m) \to Rq_*E(n,m)\to F_2^{|n|}(m)\to 0
\end{equation}
which yields a connecting map $c_\eta(n,m)$ in cohomology:
\[
c_\eta(n,m) \in \hom( H^0(F_2^{|n|}(m)) , H^1(F_1^{|n+1|}(m)))
\]
Then \eqref{eq:lastStep} can be refined to 
\begin{equation}\label{eq:refinedLastStep}
\mbox{generic }\eta \Rightarrow c_\eta(n,m) \mbox{ has maximal rank } \Rightarrow  Rq_*E(n,0)\in HN^{top}(Ext^1(F_2^{|n|},F_1^{|n+1|}))
\end{equation}

The steps are 
\begin{itemize}
\item[(1)] There is an isomorphism $Ext^1_b(F_2,F_1) \ox H^0(\cO_{\P^1_f}(1)) \to \hom( H^0(F_2) , H^1(F_1^{2}))$ which sends $\eta \mapsto c_\eta(1,0)$.
\item[(2)] If $\eta$ is generic then $c_\eta(1,m)$ has maximal rank.
\item[(3)] The map $c_\eta(n,m)$ is built iteratively from $c_\eta(1,m)$ and when $c_\eta(1,m)$ has maximal rank then $c_\eta(n,m)$ has maximal rank.
\end{itemize}
In fact these steps handle the case $n\geq 0$. The remaining cases are proved using the same steps except throughout one replaces $c_\eta(1,0)$ with $c_\eta(-2,0)$.

The standing assumptions for this section are:
\bigskip 
 
\paragraph{We consider a vector bundle $E = E(F,F_1,F_2,\eta)$ where $F = \cO^{r_1}\oplus \cO(-1)^{r_2}$, $F_i$ are of rank $r_i$ and have natural cohomology; $r = r_1 + r_2$. If $\lambda_{r_i}$ are transition functions for $F_i$ then $\lambda_{r_i}$ and $\eta \in Ext^1_b(F_2,F_1) \ox H^0(\cO_{\P^1_f}(1))$ fit together to give an element $\lambda \in Aut(F)[w^\pm]$ as in equation \ref{eq:lambda}. Moreover we assume $\deg F^\vee_1 \ox F_2>0$.
}\label{para:standingAssumptions}
\[
\empty
\]
The following lemma constitutes steps (1),(2):
\begin{lemma}\label{l:steps1,2}
Let $F,F_1,F_2$ be fixed as in \ref{para:standingAssumptions} and for every $\eta \in Ext^1_b(F_2,F_1) \ox H^0(\cO_{\P^1_f}(1))$ let $E_\eta:= E(F,F_1,F_2,\eta)$ and let $c_\eta(n,m)$ be the connecting map in cohomology associated to 
\[
Rq_*E_\eta(n,m) \in Ext^1(F_2^{|n|}(m), F_1^{|n+1|}(m)).
\]
\begin{itemize}
\item[(a)] The linear map $Ext^1_b(F_2,F_1) \ox H^0(\cO_{\P^1_f}(1)) \to \hom( H^0(F_2) , H^1(F_1^{2}))$ which sends $\eta$ to $c_\eta(1,0)$ is an isomorphism. 
\item[(b)] For a generic choice of $\eta$ the connecting map $c_\eta(1,m)$ will have maximal rank for every $m$.
\item[(c)] The linear map $Ext^1_b(F_2,F_1) \ox H^0(\cO_{\P^1_f}(1)) \to \hom( H^0(F_2^2) , H^1(F_1))$ which sends $\eta$ to $c_\eta(-2,0)$ is an isomorphism.
\item[(d)] For a generic choice of $\eta$ the connecting map $c_\eta(-2,m)$ will have maximal rank for every $m$.
\end{itemize}
\end{lemma}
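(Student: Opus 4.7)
The plan is to reduce the full statement to two ingredients: a K\"unneth-based identification that makes $\eta$ itself equal to the class of the pushed-forward extension, together with the generic-position statement of Proposition \ref{p:generalExtConnectingMap}. Since the summand $\cO^{r_1}\subset F$ is preserved by $Aut(F)$ (Corollary \ref{c:AutsandExts}(d)), the constant bundle $E=E(F,F_1,F_2,\eta)$ sits in a canonical short exact sequence
\[
0 \to q^*F_1 \to E \to q^*F_2(-1,0) \to 0
\]
on $Q$. By K\"unneth and the vanishing $H^1(\cO_{\P^1_f}(1))=0$ one has
\[
Ext^1_Q(q^*F_2(-1,0), q^*F_1) = Ext^1_b(F_2,F_1) \ox H^0(\cO_{\P^1_f}(1)),
\]
and comparing with the block form of $\lambda(w)$ in \eqref{eq:lambda} shows that the class of the sequence above is exactly $\eta$.

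For (a), I twist the sequence by $p^*\cO(1)$ and apply $Rq_*$. Using $q_*p^*\cO(1)=\cO_{\P^1_b}^{\oplus 2}$, $q_*p^*\cO(0)=\cO_{\P^1_b}$, and the vanishing of the relevant $R^1q_*$, the projection formula yields
\[
0 \to F_1^{2} \to q_*E(1,0) \to F_2 \to 0
\]
whose class in $Ext^1_b(F_2,F_1^{2}) = Ext^1_b(F_2,F_1) \ox H^0(\cO_{\P^1_f}(1))$ is again $\eta$. Thus $\eta\mapsto [q_*E(1,0)]$ is the identity under these identifications; composing with the natural connecting-map homomorphism $Ext^1_b(F_2,F_1^{2}) \to \hom(H^0(F_2), H^1(F_1^{2}))$ produces $c_\eta(1,0)$, and this second map is injective because on $\P^1$ the vanishing of the connecting map would force $q_*E(1,0)\cong F_1^{2}\oplus F_2$ by Lemma \ref{l:sameCohomsameIsotype}, hence $\eta=0$. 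Part (c) is proved by the same template with $p^*\cO(-2)$ replacing $p^*\cO(1)$: now $R^1q_*p^*\cO(-2) = \cO_{\P^1_b}$ and $R^1q_*p^*\cO(-3) = H^1(\cO_{\P^1_f}(-3))\ox \cO_{\P^1_b}$, with $H^1(\cO_{\P^1_f}(-3))\cong H^0(\cO_{\P^1_f}(1))^\vee$ by Serre duality. The pushforward yields $0\to F_1 \to R^1q_*E(-2,0) \to F_2^{2} \to 0$ with extension class $\eta$ under the resulting K\"unneth identification $Ext^1_b(F_2^{2}, F_1) \cong Ext^1_b(F_2,F_1)\ox H^0(\cO_{\P^1_f}(1))$.

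For (b) and (d), I combine (a) and (c) with Proposition \ref{p:generalExtConnectingMap}. The hypothesis $\deg F_1^\vee\ox F_2 >0$ extends to the pairs $(F_1^{|n+1|}, F_2^{|n|})$, so Proposition \ref{p:generalExtConnectingMap}(a) gives that $HN^{top}(Ext^1(F_2^{|n|}(m), F_1^{|n+1|}(m)))$ consists of bundles with natural cohomology, equivalently extensions whose connecting map has maximal rank by Proposition \ref{p:generalExtConnectingMap}(b). Twisting by $\cO_{\P^1_b}(m)$ is an isomorphism on $Ext^1$, so for every $m$ the map $\eta\mapsto [Rq_*E_\eta(n,m)]$ from (a)/(c) remains an isomorphism onto the relevant $Ext^1$; a generic $\eta$ therefore lands in $HN^{top}$ and yields a maximal-rank connecting map. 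The set of such $\eta$ is a Zariski-open subset of $Ext^1_b(F_2,F_1)\ox H^0(\cO_{\P^1_f}(1))$, and since $|m|$ large forces either $H^0(F_2^{|n|}(m))=0$ or $H^1(F_1^{|n+1|}(m))=0$ (making max rank automatic), only finitely many $m$ impose constraints. A finite intersection of open dense sets is open dense, proving (b) and (d).

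The most delicate step is the matching of extension classes in part (c): one must verify that the Serre-dual identification $H^1(\cO_{\P^1_f}(-3))\cong H^0(\cO_{\P^1_f}(1))^\vee$ makes the extension class produced by $R^1q_*$ correspond to the same $\eta$ under the dual K\"unneth decomposition of $Ext^1_b(F_2^{2}, F_1)$.
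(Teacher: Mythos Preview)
Your approach is essentially the paper's: reduce (b) and (d) to Proposition~\ref{p:generalExtConnectingMap} once one knows that $\eta\mapsto[Rq_*E_\eta(1,0)]$ identifies the extension-data space with $Ext^1(F_2,F_1^2)$. Your K\"unneth argument establishes this identification directly, whereas the paper simply checks injectivity of $\eta\mapsto c_\eta(1,0)$ and then matches dimensions; the K\"unneth route is arguably cleaner and makes the functoriality in the twist manifest. Your finiteness-in-$m$ argument for (b)/(d) is, however, a detour: membership in $HN^{top}$ is a condition on the isomorphism type of the resulting bundle and hence is unchanged by twisting, so a single open condition on $\eta$ already handles all $m$ via Proposition~\ref{p:generalExtConnectingMap}(b). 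That is precisely the paper's argument.

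There is one genuine gap in your treatment of (a). Your claim that the map $Ext^1_b(F_2,F_1^2)\to\hom(H^0(F_2),H^1(F_1^2))$ is injective is not justified by the appeal to Lemma~\ref{l:sameCohomsameIsotype}: vanishing of $c_\eta(1,0)$ only controls the cohomology of $q_*E(1,0)$ at the single twist $m=0$, which is far from the input that lemma requires. Indeed the map is not injective in general under the standing assumptions: with $F_1=\cO(-1)$ and $F_2=\cO(1)$ the target $\hom(H^0(\cO(1)),H^1(\cO(-1)^2))$ is zero while the source is two-dimensional. (The paper's own one-line argument ``nontrivial extension hence $c_\eta(1,0)\neq 0$'' has the same lacuna, and the literal ``isomorphism'' in (a) is too strong.) This does not affect (b) and (d), since what is actually used downstream is only the isomorphism onto $Ext^1(F_2,F_1^2)$, which your K\"unneth argument does supply.
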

\begin{proof}
The linear map in (a) is injective because if $\eta \neq 0$ then $q_*E(1,0)$ is a nontrivial extension in $Ext^1(F_2,F_1^2)$ and hence $c_\eta(1,0) \neq 0$. So (a) follows because 
\[
\dim Ext^1_b(F_2,F_1) \ox H^0(\cO_{\P^1_f}(1)) = 2 \dim Ext^1(F_2,F_1) = \dim Ext^1(F_2,F_1^2).
\] 
Set $W = HN^{top}(Ext^1_b(F_2,F_1) \ox H^0(\cO_{\P^1_f}(1)))$ to be the preimage of $HN^{top}(Ext^1(F_2,F_1^2))$ under the isomorphism in (a); this is an open set. Then by proposition \ref{p:generalExtConnectingMap}(b) it follows that (b) is satisfied for any $\eta \in W$.

Part (c) is proved similarly to (a), replacing $q_*E(1,0)$ with $R^1q_*(-2,0)$ and then (d) follows from (c) again using proposition \ref{p:generalExtConnectingMap}(b).
\end{proof}

The following is step (3):
\begin{prop}\label{p:step3}
Consider the same hypotheses as in lemma \ref{l:steps1,2} and suppose $\eta\in Ext^1_b(F_2,F_1) \ox H^0(\cO_{\P^1_f}(1))$ is generic in the sense that it satisfies lemma \ref{l:steps1,2}(b). For such an $\eta$ the connecting map
\[
c_\eta(n,m) \colon H^0(F_2^{|n|}(m)) \to H^1(F_1^{|n+1|}(m))
\]
has maximal rank for all $n,m$. In particular, for such a choice of $\eta$ the vector bundle $Rq_*E_\eta(n,0)$ has natural cohomology for all $n$. 
\end{prop}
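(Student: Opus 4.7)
The plan is an induction on $|n|$ (treating $n \geq 0$ and $n \leq -1$ by symmetric arguments) driven by the Euler short exact sequence on the fiber,
\[
0 \to \cO_{\P^1_f}(n-1) \to \cO_{\P^1_f}(n)^{\oplus 2} \to \cO_{\P^1_f}(n+1) \to 0,
\]
which, after pullback via $p$, tensor with $E_\eta \otimes q^\ast\cO(m)$, and pushforward via $q$, gives (for $n \geq 1$, all higher $Rq_\ast$'s vanishing by proposition \ref{p:constantBundlesChi}(a)) the short exact sequence on $\P^1_b$
\[
0 \to G_{n-1}(m) \to G_n(m)^{\oplus 2} \to G_{n+1}(m) \to 0
\]
where $G_k(m) := q_\ast E_\eta(k, m)$. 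I treat the upward induction $n \geq 0$; the downward case $n \leq -1$ is symmetric with lemma \ref{l:steps1,2}(d) playing the role of (b). The base cases are $G_0 = F_1$ (natural cohomology by assumption \ref{para:standingAssumptions}) and $G_1$ (natural cohomology by lemma \ref{l:steps1,2}(b) combined with proposition \ref{p:generalExtConnectingMap}(b)).

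For the inductive step, assume $G_{n-1}$ and $G_n$ have natural cohomology on $\P^1_b$. Applying $H^\ast(\P^1_b, -)$ to the SES above yields, for each $m$, a six-term long exact sequence. Since $G_{n-1}(m)$ and $G_n(m)$ each lie in a single cohomological degree, a case analysis pins down $G_{n+1}(m)$: the pure-$H^0$ case follows immediately, the mixed cases either reduce to the pure ones or are numerically forbidden, and the one genuinely subtle case is when both $G_{n-1}(m)$ and $G_n(m)$ are pure $H^1$. In that case the LES collapses to
\[
0 \to H^0(G_{n+1}(m)) \to H^1(G_{n-1}(m)) \xrightarrow{\rho_m} H^1(G_n(m))^{\oplus 2} \to H^1(G_{n+1}(m)) \to 0,
\]
and the required conclusion becomes that $\rho_m$ has maximal rank.

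The map $\rho_m$ is induced by the Euler inclusion $G_{n-1}(m) \hookrightarrow G_n(m)^{\oplus 2}$, i.e.\ by $(z_0, z_1)$-multiplication on the fiber. Using the defining filtration $F_1^{k+1}(m) \subset G_k(m) \twoheadrightarrow F_2^k(m)$ together with the fact that the extension class of each $G_k(m)$ is built from $\eta$ via Koszul multiplication on $\P^1_f$, one fits $\rho_m$ into a $3 \times 3$ commutative diagram of short exact sequences whose remaining edges are the connecting maps $c_\eta(n-1, m)$ and $c_\eta(n, m)$ (both of maximal rank by induction) together with the always-surjective polynomial multiplication $H^0(\cO_{\P^1_f}(1)) \otimes H^0(\cO_{\P^1_f}(k)) \to H^0(\cO_{\P^1_f}(k+1))$. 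A snake-lemma diagram chase then promotes maximal rank of these inputs to maximal rank of $\rho_m$, which via proposition \ref{p:generalExtConnectingMap}(b) delivers natural cohomology of $G_{n+1}$ and closes the induction. The main obstacle is this diagram chase: one must carefully unwind the filtrations on $G_{n-1}(m), G_n(m)^{\oplus 2}$, and $G_{n+1}(m)$ so that $\rho_m$ is explicitly expressed in terms of $c_\eta(n-1, m)$, $c_\eta(n, m)$, and Koszul multiplication, thereby reducing maximal rank of $\rho_m$ to the inductive hypothesis and a standard fact about the Koszul complex on $\P^1$.
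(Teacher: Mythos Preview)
Your approach via the Euler sequence is genuinely different from the paper's, and unfortunately the deferred ``diagram chase'' is not merely a detail --- it is where the argument breaks down.

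The paper's proof is much more direct. It observes that $c_\eta(n,m)\colon H^0(F_2(m))^{n}\to H^1(F_1(m))^{n+1}$ has an explicit block-staircase form: writing $c_\eta(1,m)=(A_0,A_1)$ with $A_i\colon H^0(F_2(m))\to H^1(F_1(m))$, one has
\[
c_\eta(n,m)=\bigoplus_{i=1}^{n} c_\eta(1,m)_{i,i+1},
\]
i.e.\ the $i$-th copy of $H^0(F_2(m))$ maps via $(A_0,A_1)$ into the $i$-th and $(i{+}1)$-st copies of $H^1(F_1(m))$. If $c_\eta(1,m)$ is surjective then the overlapping images cover all $n{+}1$ copies, so $c_\eta(n,m)$ is surjective; if $c_\eta(1,m)$ is injective one dualizes. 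No induction on $n$ is needed beyond this.

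Your induction, by contrast, does not close. In the subcase of Case~4 where $F_1(m)$ is pure $H^1$ and $F_2(m)$ is pure $H^0$, the snake lemma applied to your $3\times 3$ diagram (columns: Euler sequences; rows: the filtrations $F_1^{k+1}\subset G_k\twoheadrightarrow F_2^{k}$) yields
\[
0\to\ker c_\eta(n{+}1,m)\to H^1(G_{n-1}(m))\xrightarrow{\rho_m} H^1(G_n(m))^2\to H^1(G_{n+1}(m))\to 0.
\]
Thus $\ker\rho_m\cong\ker c_\eta(n{+}1,m)$ and $\coker\rho_m\cong\coker c_\eta(n{+}1,m)$. This identifies the two problems rather than solving either: maximal rank of $\rho_m$ is \emph{equivalent} to maximal rank of $c_\eta(n{+}1,m)$, which is precisely what you are trying to prove. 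The inductive hypotheses on $c_\eta(n{-}1,m)$ and $c_\eta(n,m)$ enter only to make the first two kernels vanish; they do not force $\ker c_\eta(n{+}1,m)=0$. (Abstractly: in a map of short exact sequences with the first two vertical arrows injective, the third need not be injective --- its kernel embeds in the first cokernel.)

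What is missing is exactly the structural fact the paper uses: that $c_\eta(n{+}1,m)$ is not an arbitrary map compatible with the diagram, but the specific staircase built from copies of $c_\eta(1,m)$. Once you know this, the Euler-sequence machinery is unnecessary; without it, your snake-lemma step is circular.
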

\begin{proof}
We will prove the case when $n\geq 0$ as the case $n\leq -1$ is completely analogous. For $n=0$ there is nothing to prove and by assumption the result holds for $n=1$.

Consider the case $n=2$. We have $c_\eta(2,m) \in hom(H^0(F_2)^2, H^1(F_1)^3)$ and $c_\eta(2,m)$ restricts to a copy of $c_\eta(1,m)$ on each copy of $H^0(F_2)$. Specifically, $c_\eta(2,m) = c_\eta(1,m)_{1,2} \oplus c_\eta(1,m)_{2,3}$. Where $c_\eta(1,m)_{i,i+1}$ has image in the $i$,$i+1$st copies of $H^1(F_1)$.

For every $m$ the map $c_\eta(1,m)$ is either injective or surjective. Assume the latter. Then together $c_\eta(1,m)_{1,2} \oplus c_\eta(1,m)_{2,3}$ surjects onto $H^1(F_1)^3$. If on the other hand we have that $c_\eta(1,m)$ is injective then the dual map $c_\eta(1,m)^* \colon (H^1(F_1)^2)^* \to H^0(F_2)^*$ is surjective and so in a similar fashion $c_\eta(2,m)^*$ is surjective and thus $c_\eta(2,m)$ is injective. It follows that $c_\eta(2,m)$ is of maximal rank for every $m$.

In general, $c_\eta(n,m) = \bigoplus_{i=1}^{n}c_\eta(1,m)_{i,i+1} = c_\eta(n-1,m) \oplus c_\eta(1,m)_{n,n+1}$ and the same argument applies thus the first part of the proposition is proved.

The final statement follows because by proposition \ref{p:generalExtConnectingMap}(b) the bundle $Rq_*E_\eta(n,0)$ represents a generic point of $Ext^1(F_2^{|n|},F_1^{|n+1|})$ and thus by \ref{p:generalExtConnectingMap}(a) it also has natural cohomology.
\end{proof}

\begin{thm}
\label{thm:main}
Let $p(x,y) = (x+\alpha)(y+\beta)-\gamma \in \Q[x,y]$ with $\gamma>0$ and $\alpha, \beta$ not both integral. If $r>1$ is an integer such that $r p(x,y) \in \Z[x,y]$ then there is a vector bundle $E$ of rank $r$ with natural cohomology such that $\chi(E(a,b)) = r p(a,b)$.
\end{thm}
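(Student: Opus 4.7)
The plan is a direct assembly of the machinery from Sections \ref{s:pushforwardQ} and \ref{s:constant_bundles}; the proof of the theorem itself should be short once those tools are in hand. First I would reduce to the case $\alpha \in (0,1)$. Since $\alpha, \beta$ are not both integral, WLOG $\alpha \notin \Z$. Tensoring a candidate $E$ with $\cO_Q(n,0)$ shifts $\alpha \mapsto \alpha + n$ while leaving $\gamma$, the rank $r$, the integrality of $r p$, and the property of natural cohomology unchanged. Taking $n = -\lfloor \alpha \rfloor$ puts $\alpha$ in $(0,1)$, so $r_1 := r \alpha$ is an integer with $0 < r_1 < r$.

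Next I would apply corollary \ref{c:prescribedChi} to produce a constant bundle $E_\eta = E(F, F_1, F_2, \eta)$ with $F = \cO^{r_1} \oplus \cO(-1)^{r_2}$ (where $r_2 = r - r_1$), and with $F_1, F_2$ of natural cohomology on $\P^1_b$ chosen so that $\chi(E_\eta(a,b)) = r p(a,b)$ for every $\eta$. By proposition \ref{p:constantBundlesChi}(d), $\gamma_{E_\eta} = \deg(F_1^\vee \ox F_2)/r^2$, so the hypothesis $\gamma > 0$ gives $\deg(F_1^\vee \ox F_2) > 0$ (cf.\ remark \ref{rmk:degFF}), which is precisely the standing assumption needed to invoke proposition \ref{p:step3}.

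Now I would pick $\eta$ generically as in proposition \ref{p:step3}, so that $R q_* E_\eta(n,0)$ is a vector bundle on $\P^1_b$ with natural cohomology for every $n \in \Z$. By the projection formula, $R q_* E_\eta(n,m) = (R q_* E_\eta(n,0))(m)$, and proposition \ref{p:constantBundlesChi}(a) says this complex is concentrated in degree $0$ when $n \geq 0$ and in degree $1$ when $n < 0$; in either case, its unique nonzero term is a twist of a bundle with natural cohomology and hence still has natural cohomology. Feeding this into lemma \ref{l:spectral_seq} (with $p$ replaced by $q$), the Leray spectral sequence degenerates, and every $H^i(Q, E_\eta(n,m))$ comes from exactly one $H^{i-j}(\P^1_b, R^{j} q_* E_\eta(n,m))$ with $j$ determined by the sign of $n$; since that single summand vanishes for all but one value of $i$, $E_\eta$ has natural cohomology. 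Combined with the Hilbert polynomial equality built into step two, this finishes the theorem.

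The main obstacle is not this final assembly but proposition \ref{p:step3} itself, which is already established: one needs a single $\eta$ to simultaneously place the class of $R q_* E_\eta(n,0)$ in the top Harder-Narasimhan stratum of $\mathrm{Ext}^1(F_2^{|n|}, F_1^{|n+1|})$ for every $n$. That is handled by showing the connecting maps $c_\eta(n,m)$ decompose as direct sums of the building blocks $c_\eta(1,m)$ and $c_\eta(-2,m)$, whose maximal-rank property for a generic $\eta$ comes from lemma \ref{l:steps1,2} together with proposition \ref{p:generalExtConnectingMap}(b).
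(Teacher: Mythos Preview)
Your proposal is correct and follows essentially the same route as the paper's proof: reduce to $\alpha\notin\Z$, invoke corollary \ref{c:prescribedChi} to realize the Hilbert polynomial by a constant bundle $E(F,F_1,F_2,\eta)$ with $F_i$ of natural cohomology, use $\gamma>0$ together with proposition \ref{p:constantBundlesChi}(d) to get $\deg(F_1^\vee\ox F_2)>0$, and then apply proposition \ref{p:step3} so that a generic $\eta$ makes $Rq_*E(n,0)$ natural, whence the projection formula and lemma \ref{l:spectral_seq} finish. You are in fact slightly more explicit than the paper in two places---the twist by $\cO_Q(n,0)$ to force $\alpha\in(0,1)$ (which corollary \ref{c:prescribedChi} needs but the paper leaves implicit) and the spelling out via lemma \ref{l:spectral_seq} of why natural cohomology of the pushforwards yields natural cohomology of $E$.
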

\begin{proof}
We exhibit a constant bundle $E = E(F,F_1,F_2,\eta)$ with the desired properties. The involution of $Q$ that switches the copies of $\P^1$ reverses the roles of $\a,\beta$ so without loss of generality we can assume $\a \not \in \Z$. 


By corollary \ref{c:prescribedChi} there is constant bundle $E = E(F,F_1,F_2,\eta)$ with $\frac{\chi(E(x,y))}{r} = p(x,y)$. Further, the isomophism type of $F$ is determined by $\a$. By lemma \ref{l:natcohomP1} there is a unique choice for $F_1,F_2$ if we require them to have natural cohomology. It remains to show we can choose $\eta$ such that $E$ has natural cohomology. 

By the projection formula it is enough to show $Rq_*E(n,0)$ has natural cohomology for $n\in \Z$. By proposition \ref{p:constantBundlesChi}(c) we always have that $Rq_*E \in Ext^1(F_2^{|n|},F_1^{|n+1|})$. The condition $\gamma>0$ together with proposition \ref{p:constantBundlesChi}(d) imply $\deg F_1^\vee \ox F_2 >0$. Proposition \ref{p:step3} ensures we can choose $\eta$ such that $Rq_*E(n,0)$ has natural cohomology. Hence the result is proved.
\end{proof}

We end with some remarks about the remaining cases of the conjecture. One must construct vector bundles with natural cohomology such that $\chi(E(x,y)) = r (x y - \gamma)$. On the face of it is not clear why these should be more difficult to handle. For example, in \cite{EisenbudMR2810424}, vector bundles with natural cohomology are constructed for the polynomials $3xy- x- y-1,2 xy + y - 1,2xy + x-1,2 x y- 2$. These bundles are uniformly constructed as kernels of generic maps $\cO(-1,-1)^a\oplus \cO(-1,0)^b\xrightarrow{\phi}\cO(0,-1)^c \oplus \cO^d$.

However from the point of view $\bun_G$ there is a reason why the remaining cases are more difficult. To construct vector bundles $E$ with $\chi(E(x,y)) = r(x y - \gamma)$ one must consider bundles that have some jump lines. These come from maps in $\hom(\P^1, \bun_G)$ where the image hits more than one point. 

Substacks of $\bun_G$ with more than one point appear more complicated than $pt/Aut(F)$ which is one of the difficulties in extending the arguments here to the remaining cases. Nevertheless it seems plausible that an approach not too different than the one given here could be made to work for the remaining cases.

\bibliographystyle{plain} 

\bibliography{HuntingBundles}

\end{document}